\newtheorem{theorem}{Theorem}[section]
\newtheorem{proposition}[theorem]{Proposition}
\newtheorem{corollary}[theorem]{Corollary}
\theoremstyle{definition}
\newtheorem{definition}[theorem]{Definition}
\newcommand{\func}[1]{{\mathrm{#1}}}
\newcommand{\C}{\mathbf{C}}
\newcommand{\B}{\mathbf{B}}
\newcommand{\Set}{\ensuremath{\mathsf{Set}}}
\newcommand{\Cat}{\ensuremath{\mathsf{Cat}}}
\newcommand{\Mon}{\ensuremath{\mathsf{Mon}}}
\newcommand{\Grp}{\ensuremath{\mathsf{Grp}}}
\newcommand{\Ab}{\ensuremath{\mathsf{Ab}}}
\DeclareMathOperator{\cod}{cod}
\DeclareMathOperator{\dom}{dom}
\DeclareMathOperator{\map}{map}
\newcommand{\squarecell}[9]{
\xymatrix@R=1pc@C=5pc{
#1 \ar@{->}[r]|{#2}="2" \ar[d]_{#4} & #3 \ar[d]^{#6} \\
#7 \ar@{->}[r]|{#8}="8" & #9  \\
\ar@{=>}^{#5} "2";"8"}}
\newcommand{\quadrado}[8]{
\xymatrix{
#1 \ar[r]^{#2} \ar[d]_{#4}  & #3  \ar[d]^{#5} \\
#6  \ar[r]^{#7}  &  #8 }}
\newcommand{\cela}[5]{
\xymatrix{
#1  \ar@/^1pc/[r]^{#2}="2"  \ar@/_1pc/[r]_{#4 }="4"  &  #5  \\
\ar@{=>}|{#3 } "2";"4"}}
\newcommand{\rg}[5]{    \xymatrix{#1 \ar@<1ex>[r]^{#2} \ar@<-1ex>[r]_{#4} & #5  \ar[l]|{#3} } }
\newcommand{\splitepi}[4]{    \xymatrix{#1 \ar@<.5ex>[r]^{#2} & #4  \ar@<.5ex>[l]^{#3} } }
\newcommand{\splitext}[6]{    \xymatrix{#1 \ar[r]^{#2} & #3 \ar@<.5ex>[r]^{#4} & #6  \ar@<.5ex>[l]^{#5} } }
\begin{document}
\title[Pseudocategory internal to a sesquicategory]{On the notion of pseudocategory internal to a category with a 2-cell structure}
\author[N. Martins-Ferreira]{Nelson Martins-Ferreira}
\address{School of Technology and Management-ESTG\\
Centre for Rapid and Sustainable Product Development-CDRSP\\
Polytechnic Institute of Leiria\\
P-2411-901, Leiria, Portugal
}
\email{martins.ferreira@ipleiria.pt}

\subjclass[2010]{18D05 18D35 18E05}
\keywords{Sesquicategory, 2-cell structure, cartesian 2-cell structure, natural 2-cell structure, pseudocategory}
\thanks{This is the result of several talks given by the author in Milan
(Oct2006), Coimbra (May2007) and Haute-Bodeux (Jun2007); it is a revised version of \cite{NMF09arxiv}. Thanks are due to G. Janelidze, R. Street and T. Van der Linden.}

\begin{abstract}
The notion of pseudocategory, as considered in \cite{Ch1}, is extended from the context of a 2-category to the more general one of a sesquicategory, which is considered as a category equipped with a 2-cell structure. Some particular examples of 2-cells arising form internal transformations in internal categories, conjugations in groups, derivations in crossed-modules or homotopies in abelian chain complexes are studied in this context, namely their behaviour as abstract 2-cells in a 2-cell structure. Issues such as naturality of a 2-cell structure are investigated. This article is intended as a preliminary starting work towards the study of the geometrical aspects of the 2-cell structures  from an algebraic point of view.
\end{abstract}

\maketitle

\section{Introduction}

In this article we are using a different notation for the vertical composition of
2-cells: instead of the usual dot `$\cdot $' or bullet `$\bullet$' we use plus `$+$'. To support
this change of notation we present an  analogy which compares the geometrical vectors in the
plane with the 2-cells between morphisms in a category.%
\begin{equation*}
\xymatrix@R=1pc@C=2pc{
&&&&&&
\ar@{->}[rr]^{\lambda u}
&& \\
&&&&&&&&\\
&&&&&
\ar@{->}[r]^{u}
\ar@{->}[rrrd]_{v+u}
&
\ar@{->}[rrd]^{v}
&&\\
&&&&&&&&\\
\ar[rrrrrrrr]_(1){x}
&&&&&&&&\\
\ar@{--}[rrrrrrrruu]
&&&&&&&&\\
&&
\ar@{--}[rrrrrruuuuuu]
&
\ar@{--}[rrruuuuuu]
\ar@{->}[l]^{-u}
&
\ar[uuuuuu]^(1){y}
&&&&\\}
\end{equation*}%
Two geometrical vectors on the plane can be added only if the end point of
the second one ($u$ as in the picture above) is the starting point of the first
one ($v$ as in the picture), in that case the resulting vector, which is their sum,
goes from the starting point of the second one to the end point of the first one. The same happens
for 2-cells in a category:%
\begin{equation*}
\xymatrix{
\cdot
  \ar@/^2pc/[r]^{\func{dom}u}="1"
  \ar[r]|{}="2"
  \ar@/_2pc/[r]_{ \cod v}="3"
 &   \cdot
 \\
\ar@{=>}^{u } "1";"2"
\ar@{=>}^{v } "2";"3"}
\longmapsto
\cela {\cdot } {\dom u} {v+u} {\cod v} {\cdot};
\end{equation*}%
In some sense the analogy can be extended for scalar multiplication%
\begin{equation*}
\xymatrix{
\cdot
\ar[r]^{\lambda}
& \cdot
  \ar@/^1pc/[r]^{\dom u}="1"
  \ar@/_1pc/[r]_{\cod u }="2"
 &   \cdot
  \ar[r]^{\rho}
 & \cdot \\
\ar@{=>}^{u } "1";"2"}\longmapsto \cela {\cdot } {\rho\dom(u)\lambda } {\rho u \lambda} {\rho\cod(u)\lambda} {\cdot},
\end{equation*}%
and for inverses (in the case they exist, and in a much lesser degree of analogy)%
\begin{equation*}
\cela {\cdot } {\dom u} {\, u} {\cod u} {\cdot} \longmapsto
\cela {\cdot } {\cod u} {-u} {\dom u} {\cdot}.
\end{equation*}%
Concerning horizontal composition, there is still an analogy with some
relevance: it is, in some sense, analogous to the cross product of vectors --- in the sense that it raises in dimension (see the introduction of \cite{Crans2} and its references for further discussion on this). Given 2-cells, $u$ and $v$, displayed as,
\begin{equation*}
\xymatrix{
\cdot
  \ar@/^1pc/[r]^{\dom u}="1"
  \ar@/_1pc/[r]_{\cod u }="2"
 &  \cdot
  \ar@/^1pc/[r]^{\dom v}="3"
  \ar@/_1pc/[r]_{\cod v }="4"
 & \cdot \\
\ar@{=>}^{ u } "1";"2"
\ar@{=>}^{ v } "3";"4"}
\end{equation*}%
the horizontal composition, usually denoted by $v\circ u$, is expected to be an isomorphic 3-cell, from the 2-cell%
\begin{equation}
\cod \left( v\right) u+v\dom \left( u\right)  \label{f1}
\end{equation}%
to the 2-cell%
\begin{equation}
v\cod \left( u\right) +\dom \left( v\right) u.  \label{f2}
\end{equation}%
In some cases $\left( \ref{f1}\right) $ and $\left( \ref{f2}\right) $
coincide (as it happens in a 2-category) and this is the reason why we may consider  a horizontal composition. 

The purpose of this work is to extend the notion of pseudocategory internal to a 2-category \cite{Ch1} to the more general context of a pseudocategory internal to a category with a 2-cell structure (or sesquicategory) in which case $(\ref{f1})$ and $(\ref{f2})$ are not necessarily equal.

To do this we have to consider the horizontal composition as a relation, written as $v\circ u$, saying that the 2-cell $v$ is
natural with respect to the 2-cell $u$, which is defined by%
\begin{equation*}
v\circ u\Longleftrightarrow \left( \ref{f1}\right) =\left( \ref{f2}\right).
\end{equation*}%

With this regard, the horizontal composition is only defined for those pairs $\left( v,u\right)$ that are in relation $v\circ u$, with the composite
being given, in that case, by either $\left( \ref{f1}\right) $ or $\left( \ref{f2}%
\right)$.

This is a geometrical intuition. An algebraic intuition is provided in
Proposition \ref{giving a 2-cell structure}. It is also possible to put the geometrical analogy into a formal context. It is obtained by taking the category whose objects are the straight lines passing through the origin while the morphisms are the points in the plane except the origin. The domain and codomain associate to each non-null vector its direction and the composition of two vectors with the same direction is the product of their signed magnitudes as real numbers. Then the geometrical vectors in the plane appear in this situation as the codiscrete 2-cell structure.

This article is organized as follows.
In section \ref{section2} we recall the notions of internal (pre)category, internal (pre)functor and internal (natural) transformation. This section  is also used to introduce some notation. In section \ref{section 3} we consider an arbitrary fixed category, $\mathbf{C}$, and define a 2-cell structure over it, as to make it a sesquicategory. We give a characterization of that structure as a family of sets, together with maps and actions,
satisfying some conditions (Proposition \ref{giving a 2-cell structure}). This characterization generalizes the description of
2-Ab-categories as families of abelian groups, together with group
homomorphisms and laws of composition, which may be found in \cite{Ch5} and \cite{Ch6}, except that now, the (strong) condition%
\begin{equation}\label{cond Dxy=xDy}
D\left( x\right) y=xD\left( y\right)
\end{equation}
is no longer a requirement. A useful consequence of this is  the possibility of considering the example of chain
complexes, say of order 2, which is treated in section \ref{ex13}. The condition $(\ref{cond Dxy=xDy})$ above is equivalent to the naturality condition, and since the
results obtained in \cite{Ch5} and \cite{Ch6} rely on this
assumption, we have to be careful in removing it. With that regard we introduce and
study the concept of a 2-cell being natural with respect to another 2-cell (Definition \ref{def nat2cellwrt}),
and the concept of a natural 2-cell (Definition \ref{def natcell}), which is natural with respect to all the possible 2-cells whose horizontal composition with it makes sense (section \ref{section 5}). There is also an entire section dedicated to a list of examples (section \ref{section 4}).

During the last part of the paper (sections \ref{section 5} and \ref{section 6}) we work towards the definition of pseudocategory internal to an arbitrary category equipped with a 2-cell structure. For that we need the notion of cartesian square with respect to a specified 2-cell strucutre (Definiton \ref{Def: cartesian 2-cell structure, Ch2}), and the notions of natural and invertible 2-cell structures. We compare the abstract notions of naturality, which are being introduced, when $\mathbf{C}$ is a category of the form $\Cat\left( \mathbf{B}\right)$, of internal categories in some category \textbf{B}. As expected, when the 2-cell structure consists on the internal
transformations (not necessarily natural) then every natural transformation  is a natural 2-cell (Corollary \ref{corolary1}).
%


At the end we extend the notion of pseudocategory from the context of a
2-category to the more general context of a category with a 2-cell structure
(sesquicategory). As an example of application we consider the
sesquicategory of abelian chain complexes with homotopies as 2-cells and
study pseudocategories in there. We also give some results extracted from \cite{Ch9} concerning (weakly) Mal'tsev sesquicategories.

All the notions defined in \cite{Ch1}: pseudofunctor, natural and
pseudo-natural transformation and modification, can be extended in this
way. %
However, these considerations are to be developed in a future work. This paper
is the starting point for a systematic study of internal categorical
structures in a category with a given 2-cell structure, and also to
investigate how these categorical structures are changed when the given
2-cell structure over the (fixed) base category varies. For example, a
pseudocategory, in a category with the discrete 2-cell structure, is an
internal category, while if the 2-cell structure is the codiscrete one, then it
is just a precategory.

This work is a revised version of \cite{NMF09arxiv}.

\section{Internal precategories}\label{section2}

A usual assumption on a category $\C$, which is frequently asked when working with categories internal to $\C$, is the existence of pullbacks. Indeed, a minimal
requirement is the existence of pullbacks of split epimorphisms along split
epimorphisms. In this work we are interested in the notion of internal
(pre)category without assuming the existence of pullbacks on the ambient category $\C$. This means that we have to
consider an internal (pre)category as a structure where some conditions are satisfied including, in particular, the requirement that some squares have  the property of being pullback squares. This approach is useful, for example,
in the study of (pre)categories internal to arbitrary ambient categories. Later on, in this text, when considering the notion of pseudocategory internal to a category with a 2-cell structure, we will also be considering pullback squares that share their universal property with morphisms and 2-cells, they will be called cartesian squares. There are currently several slightly different notions of precategory in the literature, mainly with respect to what concerns axiom (PC3) below (with the main reference on this topic being \cite{Janelidze01}). Here we are freely using the notion which best fits the setting of our work. This notions will be considered first in section \ref{ex7}, and then in Proposition \ref{prop_naturality of transformations}.

Let $\C$ be an arbitrary category. A precategory internal to $\C$ is a diagram of the shape \[\xymatrix{A_2\ar[r]|{m}\ar@<1ex>[r]^{\pi_2}\ar@<-1ex>[r]_{\pi_1}&A_1\ar@<1ex>[r]^{d}\ar@<-1ex>[r]_{c}&A_0\ar[l]|{e}}\] satisfying the following conditions:
\begin{enumerate}
\item[PC1] $de=1_{A_0}=ce$,
\item[PC2]  $dm=d\pi_2$ and $cm=c\pi_1$,
\item[PC3] the square \[\xymatrix{A_2\ar[r]^{\pi_2}\ar[d]_{\pi_1}&A_1\ar[d]^{c}\\A_1\ar[r]^{d}&A_0}\] is a pullback square.
\end{enumerate}


By condition (PC3) we mean that the square is commutative and that the usual universal property of pullbacks is satisfied.

A precategory $A$, internal to $\C$, will be represented as a nine-tuple \[(A_0,A_1,A_2,d,c,e,m,\pi_1,\pi_2).\] 
In some cases thought, and in order to simplify notation,  when all pullbacks exist in $\C$ or when it is not important to specify the pullback $A_2$ with its projections, we will refer to a precategory $A$ simply as a six-tuple \[A=(A_0,A_1,d,c,e,m).\]

If $A=(A_0,A_1,d,c,e,m)$ and $B=(B_0,B_1,d',c',e',m')$ are two internal precategories then a morphism between them is called a prefunctor and it consists in a pair $(f_0,f_1)$ of morphisms, $f_i\colon{A_i\to B_i}$, $i=0,1$, such that $d'f_1=f_0d$ and $c'f_1=f_0c$.

A transformation between two internal prefunctors, say from $f=(f_0,f_1)$ to $g=(g_0,f_1)$, both from a precategory \[A=(A_0,A_1,A_2,d,c,e,m,\pi_1,\pi_2)\] to a precategory \[B=(B_0,B_1,B_2,d',c',e',m',\pi'_1,\pi'_2),\] is a triple\[t=(t_1,t_2,t_3)\] with $t_1\colon{A_0\to B_1}$, $t_2,t_3\colon{A_1\to B_2}$ morphisms in $\C$ such that 
\begin{eqnarray}
dt_1=f_0 &,& ct_1=g_0\nonumber\\
\pi_1t_2=t_1c &,& \pi_2t_2=f_1\nonumber\\
\pi_1t_3=g_1 &,& \pi_2t_3=t_1d.\nonumber
\end{eqnarray} 
Note that $B_2$ is a pullback, hence $t_2=\langle t_1c,f_1\rangle$ and $t_3=\langle g_1,t_1d \rangle$.

An internal category is a precategory $( C_{0},C_{1},C_2,d,c,e,m,\pi_1,\pi_2)$ in which the following conditions are satisfied:
\begin{enumerate}
\item[PC4] $m\langle ec,1_{C_1}\rangle=1_{C_1}=m\langle 1_{C_1}, ed\rangle$,
\item[PC5] there exists a span \[\xymatrix{C_2&C_3\ar[l]_{p_1}\ar[r]^{p_2}&C_2}\] such that the square \[\xymatrix{C_3\ar[r]^{p_2}\ar[d]_{p_1}&C_2\ar[d]^{\pi_1}\\C_2\ar[r]^{\pi_2}&C_1}\] is a pullback square,
\item[PC6] $m(1_{C_1}\times_{C_0} m)=m(m\times_{C_0} 1_{C_1})$.
\end{enumerate}

An internal functor is a prefunctor $f\colon{A\to B}$ of internal precategories (that are internal categories) which, in addition to $d'f_1=f_0d$ and $c'f_1=f_0c$, satisfies $f_1e=e'f_0$ and $f_1m=m'f_2$, with $f_2=f_1\times_{f_0}f_1$.

A transformation $t=(t_1,t_2,t_3)\colon{f\to g}$ is a natural transformation if, in addition to the conditions above, $m't_2=m't_3$.

The purpose of this work is to consider a setting which is appropriate to the handling of internal categories and internal precategories as two extreme cases of the more  general notion of pseudocategory. As it will be clear from the next section, every category can be equipped with several 2-cell structures. Each one of them will give a different notion of a pseudocategory. In particular it is always possible to define two trivial 2-cell structures on the same ambient category: the discrete and the codiscrete ones. An internal category is a pseudocategory with respect to the discrete 2-cell structure while a precategory is a pseudocategory with respect to the codiscrete 2-cell structure. Our interest will be focused on the notions which may arise as intermediate cases.

\section{Categories with 2-cell structures or sesquicategories}\label{section 3}

As already motivated by the introduction, this work considers the notion of a category with a 2-cell structure, which is the same thing as a sesquicategory. We prefer to call it a category with a 2-cell structure in order to emphasise the possibility of specifying  different 2-cell structures over the same ambient base category. The example which motivates this approach is the category of (left and right) $R$-modules for some unitary ring $R$ which also suggests the additive notation that is being used throughout the text.

Let $\mathbf{C}$ be an arbitrary but fixed category. Its \emph{hom} functor \[\hom_{\C}\colon{\C^{op}\times \C\to\Set}\] will be simply referred to as $\hom$.

A 2-cell structure on $\C$, or over $\C$, is a category object, $(C_0,C_1,d,c,e,m)$, internal to the functor category $\Set^{\C^{op}\times \C}$ whose object of objects, $C_0$, is the functor $\hom$. 

\begin{definition}[2-cell structure]\label{def 2cellstruct}
A 2-cell structure on $\mathbf{C}$ is a system%
\begin{equation*}
\mathbf{H}=\left( H,\dom ,\cod ,0,+\right)
\end{equation*}%
with
\begin{equation*}
H\colon{\C^{op}\times \C\to\Set}
\end{equation*}%
\newline
a functor, $\dom,\cod,0,+$ natural transformations, displayed as,%
\begin{equation*}
H\times _{\hom }H\overset{+}{\longrightarrow }H%
\begin{array}{c}
\underrightarrow{\dom } \\
\overset{0}{\longleftarrow } \\
\overrightarrow{\cod }%
\end{array}%
\hom 
\end{equation*}%
and such that the six-tuple%
\begin{equation*}
\left( \hom ,H,\dom ,\cod ,0,+\right)
\end{equation*}%
is an internal category in the functor category $\Set^{\C^{op}\times \C}$.
\end{definition}

A category $\C$, equipped with a 2-cell structure $\mathbf{H}$, is represented as a pair $(\C,\mathbf{H})$, and it is a sesquicategory (see for example \cite{Street,Street2,str3}).

\begin{proposition}Every category with a 2-cell structure over it is a sesquicategory. Every sesqui\-category determines a 2-cell structure over its underlying category.
\end{proposition}
\begin{proof}
 A sesquicategory is a category, $\mathbf{C}$, with a functor, $\mathbf{L}\colon{\C^{op}\times \C\to\Cat}$, into $\Cat$,
such that its restriction to $\Set$ (by forgetting the arrows, $\pi_0$) gives $\hom _{\mathbf{C}}$ and its restriction to $\Set$ (by forgetting the objects, $\pi_1$) gives $\mathbf{H}$, as illustrated.
\begin{equation*}
\xymatrix{ & \Cat \ar@<-.5ex>[d]_{\pi_1}\ar@<.5ex>[d]^{\pi_0} \\ \mathbf{C}^{op} \times \mathbf{C}
\ar[ur]^{\mathbf{L}} \ar@<-.5ex>[r]_{\hom}\ar@<.5ex>[r]^{\mathbf{H}} & \Set }
\end{equation*}
\end{proof}

The notation introduced in the following proposition will be used throughout the text. It gives a detailed description of the whole information which is needed to equip a category $\C$ with a 2-cell structure $\mathbf{H}=(H,\dom,\cod,0,+)$. This notation is borrowed from the motivating example of left and right modules over a ring.

\begin{proposition}
\label{giving a 2-cell structure}Giving a 2-cell structure over a category $%
\mathbf{C}$, is to give, for every pair $\left( A,B\right) $ of objects in $%
\mathbf{C}$, a set $H\left( A,B\right) $, together with maps%
\begin{equation}\label{intcat Hom(A,B)}
H\left( A,B\right) \times _{\hom \left( A,B\right) }H\left( A,B\right)
\overset{+}{\longrightarrow }H\left( A,B\right)
\begin{array}{c}
\underrightarrow{\dom } \\
\overset{0}{\longleftarrow } \\
\overrightarrow{\cod }%
\end{array}%
\hom \left( A,B\right) ,
\end{equation}%
and \emph{actions}%
\begin{eqnarray*}
&&%
\begin{array}{ccc}
H\left( B,C\right) \times \hom \left( A,B\right) & \longrightarrow & H\left(
A,C\right) \\
\left( x,f\right) & \longmapsto & xf%
\end{array}
\\
&&%
\begin{array}{ccc}
\hom \left( B,C\right) \times H\left( A,B\right) & \longrightarrow & H\left(
A,C\right) \\
\left( g,y\right) & \longmapsto & gy%
\end{array}%
\end{eqnarray*}%
satisfying the following conditions%
\begin{gather}
\dom \left( gy\right) =g\dom \left( y\right) \ ,\ \dom \left(
xf\right) =\dom \left( x\right) f  \label{Naturality of dom,cod,0,+} \\
\cod \left( gy\right) =g\cod \left( y\right) \ ,\ \cod \left(
xf\right) =\cod \left( x\right) f  \notag \\
g0_{f}=0_{gf}=0_{g}f  \notag \\
\left( x+x^{\prime }\right) f=xf+x^{\prime }f\ ,\ g\left( y+y^{\prime
}\right) =gy+gy^{\prime }  \notag
\end{gather}%
\begin{eqnarray}
g^{\prime }\left( gy\right) &=&\left( g^{\prime }g\right) y\ ,\ \left(
xf\right) f^{\prime }=x\left( ff^{\prime }\right)  \label{Functoriality of H}
\\
g^{\prime }\left( xf\right) &=&\left( g^{\prime }x\right) f  \notag \\
1_{C}x &=&x=x1_{B}  \notag
\end{eqnarray}%
\begin{eqnarray}
\dom \left( 0_{f}\right) &=&f=\cod \left( 0_{f}\right)
\label{axioms for a category object} \\
\dom \left( x+x^{\prime }\right) &=&\dom(x^{\prime })\ ,\ \cod \left(
x+x^{\prime }\right) =\cod(x)  \notag \\
0_{\cod x}+x &=&x=x+0_{\dom x}  \notag \\
x+\left( x^{\prime }+x^{\prime \prime }\right) &=&\left( x+x^{\prime
}\right) +x^{\prime \prime }.  \notag
\end{eqnarray}
\end{proposition}

\begin{proof}
For every $f:A^{\prime }\longrightarrow A$, $g:B\longrightarrow B^{\prime }$
and $x\in H\left( A,B\right) $, write%
\begin{equation*}
H\left( f,g\right) \left( x\right) =gxf
\end{equation*}%
and it is clear that the set of conditions $\left( \ref{Naturality
of dom,cod,0,+}\right) $ asserts the naturality of
$\dom ,\cod ,0,+$; the set of conditions $\left(
\ref{Functoriality of H}\right) $ asserts the functoriality  of $H$ and the set of conditions $\left( \ref{axioms
for a category object}\right) $ asserts the axioms for an internal category.
\end{proof}

On a category $\C$ with a 2-cell structure $\mathbf{H}$ in general, there is no horizontal composition (or pasting) for 2-cells, that being the case only when the pair $(\C,\mathbf{H})$ is a 2-category.

\begin{proposition}
\label{Prop. 2-category as a cat with 2-cell stru + naturality}A category $%
\mathbf{C}$ with a 2-cell structure%
\begin{equation*}
\mathbf{H}=\left( H,\dom ,\cod ,0,+\right) ,
\end{equation*}%
is a 2-category if and only if the \emph{naturality condition}
\begin{equation}
\cod \left( x\right) y+x\dom \left( y\right) =\smallskip x\func{cod%
}\left( y\right) +\dom \left( x\right) y  
\label{Naturality condition for x,y}
\end{equation}%
holds for every $x\in H\left( B,C\right) ,\ y\in H\left( A,B\right) ,$ and
every triple of objects $\left( A,B,C\right) $ in $\mathbf{C}$, as displayed
in the diagram below%
\begin{equation*}
\xymatrix{
A
  \ar@/^1pc/[r]^{\dom y}="1"
  \ar@/_1pc/[r]_{\cod y }="2"
 &  B
  \ar@/^1pc/[r]^{\dom x}="3"
  \ar@/_1pc/[r]_{\cod x }="4"
 & C \\
\ar@{=>}^{ y } "1";"2"
\ar@{=>}^{ x } "3";"4"}.
\end{equation*}
\end{proposition}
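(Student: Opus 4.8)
The plan is to use the classical fact that a 2-category is precisely a sesquicategory whose whiskerings extend to a horizontal composition of 2-cells obeying the middle-four interchange law, and to show that, once the whiskerings are fixed, this interchange law is equivalent to the pointwise naturality condition. Throughout I regard a 2-category structure on $(\mathbf{C},\mathbf{H})$ as a family of maps $\ast\colon H(B,C)\times H(A,B)\to H(A,C)$ that extend the two whiskerings, via $x\ast 0_{f}=xf$ and $0_{g}\ast y=gy$, and satisfy $(x+x')\ast(y+y')=(x\ast y)+(x'\ast y')$ on vertically composable pairs. The argument then splits into showing that the existence of such an $\ast$ forces the naturality condition, and conversely that the naturality condition lets one construct $\ast$.

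For the forward implication, suppose $\ast$ exists. The idea is to evaluate $x\ast y$ in two ways by inserting identity 2-cells. Since $0_{\func{cod}x}+x=x$ and $y+0_{\func{dom}y}=y$, applying the interchange law to $(0_{\func{cod}x}+x)\ast(y+0_{\func{dom}y})$ and then identifying the whiskerings $0_{\func{cod}x}\ast y=\func{cod}(x)y$ and $x\ast 0_{\func{dom}y}=x\func{dom}(y)$ yields $x\ast y=\func{cod}(x)y+x\func{dom}(y)$. Writing instead $x=x+0_{\func{dom}x}$ and $y=0_{\func{cod}y}+y$ and repeating the argument yields $x\ast y=x\func{cod}(y)+\func{dom}(x)y$. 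Equating the two resulting expressions for $x\ast y$ is exactly the naturality condition.

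Conversely, assuming the naturality condition, I would \emph{define} $x\ast y:=\func{cod}(x)y+x\func{dom}(y)$, which is a 2-cell from $\func{dom}(x)\func{dom}(y)$ to $\func{cod}(x)\func{cod}(y)$. Using $g0_{f}=0_{gf}=0_{g}f$ together with the unit laws $0_{\func{cod}x}+x=x=x+0_{\func{dom}x}$, one checks that $x\ast 0_{f}=xf$, $0_{g}\ast y=gy$ and $0_{g}\ast 0_{f}=0_{gf}$, so $\ast$ genuinely extends the whiskerings. The unit and associativity laws for $\ast$ then reduce, after distributing whiskering over $+$, to the functoriality conditions \eqref{Functoriality of H} and the category-object axioms \eqref{axioms for a category object}; in particular they do not use the hypothesis.

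The crux — and the only step requiring the hypothesis — is the interchange law. Expanding $(x+x')\ast(y+y')$ from the definition, using $\func{cod}(x+x')=\func{cod}(x)$ and $\func{dom}(y+y')=\func{dom}(y')$ and distributing whiskering over $+$ as in \eqref{Naturality of dom,cod,0,+}, and likewise expanding $(x\ast y)+(x'\ast y')$, both sides become iterated vertical sums with the \emph{same} bottom summand $x'\func{dom}(y')$ and the same top summand $\func{cod}(x)y$. By associativity of $+$ the two sides then agree as soon as their middle sub-expressions $\func{cod}(x)y'+x\func{dom}(y')$ and $x\func{cod}(y')+\func{dom}(x)y'$ coincide; but this is precisely the naturality condition for the pair $(x,y')$. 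Hence the naturality condition implies interchange, and combined with the forward implication it is both necessary and sufficient. I expect this reduction of the middle-four law to a single instance of naturality to be the only delicate bookkeeping in the proof, the remaining axioms being routine consequences of the sesquicategory structure.
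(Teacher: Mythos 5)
Your proof is correct and takes essentially the same route as the paper's: both directions hinge on defining the horizontal composite by the whiskering formula $x\ast y=\func{cod}(x)y+x\func{dom}(y)$ (equivalently $x\func{cod}(y)+\func{dom}(x)y$) and observing that the middle-four interchange law is equivalent to the naturality condition. You merely supply the bookkeeping that the paper's terse proof leaves implicit, namely the identity-insertion argument for the forward direction and the reduction of interchange to the single instance of naturality for the pair $(x,y')$.
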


\begin{proof}
If $(\C,\mathbf{H})$ is a 2-category, the naturality condition follows from the
horizontal composition of 2-cells and, conversely, given a 2-cell structure $\mathbf{H}$
over $\C$, in order to make it a 2-category one has to define a
horizontal composition which is given by%
\begin{equation*}
x\circ y=\cod \left( x\right) y+x\dom \left( y\right)
\end{equation*}%
or
\begin{equation*}
x\circ y=x\cod \left( y\right) +\dom \left( x\right) y
\end{equation*}%
provided the naturality condition is satisfied for every appropriate $x$ and $y$.
The middle interchange law follows from the naturality condition.
\end{proof}

The next section is dedicated to a list of working examples which will be used further on.

\section{Examples}\label{section 4}

In order to illustrate the notion of a category $\C$ with a 2-cell structure $\mathbf{H}$, in the notation of Proposition \ref{giving a 2-cell structure}, we consider some special examples of categories on which we can define certain kinds of 2-cell structures.

\subsection{The terminal category}\label{ex1}

In this case $\C=\mathbf{1}$ and $\hom(1,1)$ is a singleton set, which means that $\dom$ and $\cod$ are uniquely determined. Hence $H=H(1,1)$ is just a set and $\mathbf{H}=(H,0,+)$ is simply a monoid. Indeed all the conditions on  $(\ref{Naturality of dom,cod,0,+})$ are trivial, the conditions on $(\ref{Functoriality of H})$ force the actions to be trivial, while the last two conditions on $(\ref{axioms for a category object})$ ensure that $(H,0,+)$ is a monoid.

\subsection{A discrete category}\label{ex2}

In this case $\hom(A,B)$ is either empty or a singleton set, respectively if $A\neq B$ or $A=B$.
This means that a 2-cell structure on $\C$ is a collection of monoids $\mathbf{H}_A=(H,0,+)_{A}$, one for each object $A$ in $\C$.

\subsection{A codiscrete category}\label{ex3}

If $\C$ is a codiscrete category, that is, $\hom(A,B)$ is a singleton set for each pair of its objects, then to give a 2-cell structure is to specify, for each pair of objects $(A,B)$ in $\C$, a monoid $(H(A,B),0,+)$ and for every four-tuple of objects $(A,B,C,D)$ in $\C$ a homomorphism of monoids \[\mu(A,B,C,D)\colon{H(B,C)\to H(A,D)}\] such that for every $A',A,B,C,D,D'$, objects in $\C$, \[\mu(A',B,C,D')=\mu(A',A,D,D')\circ \mu(A,B,C,D)\] and \[\mu(A,A,B,B)=1_{H(A,B)}.\] In other words, we have to specify a functor $H\colon{\C^{op}\times \C\to \Mon}$ into the category $\Mon$ of monoids.

\subsection{A preorder considered as a category}\label{ex4}

When $\C$ is a preorder, considered as a category,  as it follows from the previous example, to specify a 2-cell structure over it is to give, for every two objects $A,B$ in $\C$ with $A\leq B$, a monoid $(H(A,B),0,+)$ and for every four objects $A,B,C,D$ in $\C$ with $A\leq B\leq C\leq D$, a homomorphism of monoids \[\mu(A,B,C,D)\colon{H(B,C)\to H(A,D)}\] such that for every $A'\leq A\leq B\leq C\leq D\leq D'$, \[\mu(A',B,C,D')=\mu(A',A,D,D')\circ \mu(A,B,C,D)\] and \[\mu(A,A,B,B)=1_{H(A,B)}.\] In other words it is a functor $H\colon{\C^{op}\times \C\to \Mon}$.

\subsection{A monoid considered as a one object category}\label{ex5}

When $\C$ is a monoid, considered as a one object category, the description given in Proposition \ref{giving a 2-cell structure}, for a 2-cell structure over $\C$, cannot be simplified much further except perhaps for saying that instead of a collection of internal categories $(\ref{intcat Hom(A,B)})$, one for each pair of objects $(A,B)$ in $\C$, there will be only one for the unique object of $\C$, in this particular case.

Nevertheless, we may consider some special classes of 2-cell structures that may be defined over $\C$.

If $\C=(M,1,\cdot)$ is a monoid (that is, a one object category) then every monoid $(H,0,+)$, on which $M$ acts on the left and on the right, together with a map\[D\colon{H\times M\to M}\] such that \begin{eqnarray}
D(0,f)&=&f\nonumber\\
D(x'+x,f)&=&D(x',D(x,f))\nonumber\\
gD(x,f)h&=&D(gxf,gfh)\nonumber
\end{eqnarray}
for every $x,x'\in H$ and $f,g,h\in M$, induces a 2-cell structure over $\C$. The needed diagram $(\ref{intcat Hom(A,B)})$ is obtained as
\[\xymatrix{H\times H\times M\ar[r]^{m}&H\times M\ar@<1.5ex>[r]^(.6){\pi_M}\ar@<-1.5ex>[r]_(.6){D}&M\ar[l]|(.35){\langle 0, 1\rangle}}\] 
with $m(x,y,f)=(x+y,f)$.

\subsection{A group considered as a one object groupoid}\label{ex6}

In the same fashion of the previous example, here we have $\C=(G,1,\cdot)$ which is now a group and for every other group $(H,0,+)$ on which $G$ acts on the left and on the right, together with a map $D\colon{H\times M\to M}$ satisfying the same three conditions from above, induces, in the same way as before, a 2-cell structure over $\C$. This example is important because in here we are considering all the 2-cells to be invertible (Definition \ref{def invcell}).

We will also introduce the notion of commutator (Definition \ref{def commutator}) for appropriate 2-cells, which in this case is always defined and it is given by the formula
\begin{equation*}
\lbrack (x,f),(y,g)]=D\left( y,g\right) x+yf-yD\left( x,f\right) -gx.
\end{equation*}

The commutator vanishes if and only if the 2-cells $x$ and $y$ can be composed horizontally (or pasted together).

\subsection{A category of the form $\Cat(\mathbf{B})$ for some category B}\label{Example: 2-cells
in Cat}\label{ex7}

When $\C$ is a category of internal categories in some category $\mathbf{B}$, we can always consider the 2-cell structure given by the internal transformations in the sense of section \ref{section2}.

If $A=\left( A_{0},A_{1},A_2,d,c,e,m,\pi_1,\pi_2\right)$ and  $B=\left( B_{0},B_{1},B_2,d',c',e',m',\pi'_1,\pi'_2\right)$ are two internal categories in $\B$, then  we define
\begin{eqnarray*}
H\left( A,B\right) &=&\left\{ \left( t_1,t_2,t_3\right) \mid
t_1\colon{A_0\to B_1},t_2,t_3\colon{A_1\to B_2}, \pi'_1 t_2=t_1c,\pi'_2t_3=t_1d\right\} \\
\end{eqnarray*}%
and
\begin{eqnarray*}
\dom(A,B) \left( t\right) &=&(d't_1,\pi'_2 t_2) \\
\cod(A,B) \left( t\right) &=&(c't_1,\pi'_1 t_3).
\end{eqnarray*}%

If $f=(f_0,f_1)\colon{A'\to A}$ and $g=(g_0,g_1)\colon{B\to B'}$ are internal functors, we write $g_2$ to abbreviate $g_1\times_{g_0}g_1$ and then define
\begin{eqnarray*}
H\left( f,g\right)(t)=H((f_0,f_1),(g_0,g_1)) \left( t_1,t_2,t_3\right) &=&\left( g_1t_1f_0,g_{2}t_2f_{1},g_2t_3f_1\right).
\end{eqnarray*}%

To say what are the  identity two cells and how the vertical composition is defined, we first consider the set $H(A,B)$ as the set \[\{(t_1,h_1,k_1)\mid t_1\colon{A_0\to B1},h_1,k_1\colon{A_1\to B1}, (h_1,d't_1),(k_1,c't_1)\in\hom(A,B)\},\]
denoted by $L(A,B)$, which is in bijection with $H(A,B)$ as follows:
\begin{eqnarray}
(t_1,t_2,t_3)&\mapsto &(t_1,\pi'_2t_2,\pi'_1 t_3),\nonumber\\
(t_1,h_1,k_1)&\mapsto& (t_1,\langle t_1c,h_1\rangle,\langle k_1,t_1 d\rangle)\nonumber.
\end{eqnarray}
So if $h=(h_0,h_1)\colon{A\to B}$ is an internal functor and given any $t,s\in H(A,B)\cong L(A,B)$, such that $\dom(t)=h=\cod(s)$ we define
\begin{eqnarray}
0_{h} &\cong&( e'h_{0},h,h) \\
t +_h s &\cong&(m\langle t_1,s_1\rangle,\pi'_2s_2,\pi'_1 t_3).
\end{eqnarray}

Later on (Proposition \ref{prop_naturality of transformations} and subsequent) we will show that, in this context, every internal natural transformation is a natural 2-cell (in the sense of Definition \ref{def natcell}).

\subsection{The category of groups and group homomorphisms}\label{ex8}


When $\mathbf{C}=\Grp$, the category of groups and group
homomorphisms, we may consider the
canonical 2-cells obtained by considering each group as a one
object groupoid and each group homomorphism as a functor. In that case, as
it is well known, a 2-cell
\begin{equation*}
t:f\longrightarrow g
\end{equation*}%
from the homomorphism $f$ to the homomorphism $g$, both from the group $A$
to the group $B$, is an element $t\in B$ such that%
\begin{equation*}
tf\left( x\right) =g\left( x\right) t\ \ \ ,\ \ \ \text{for all }x\in A.
\end{equation*}%
Since for a given $t$ and $f$, the homomorphism $g$ is uniquely determined as%
\begin{equation*}
g\left( x\right) =tf\left( x\right) t^{-1}=\ ^{t}f\left( x\right) ,
\end{equation*}%
this particular 2-cell structure over $\Grp$ is just an instance of example \ref{Example 1} (see below) with $\Grp$ instead of $\Mon$.

To see it we consider $H$, the functor that projects the second argument%
\begin{gather*}
H:\Grp^{op}\times \Grp\longrightarrow \Grp \\
\left( A,B\right) \longmapsto B
\end{gather*}%
together with%
\begin{gather*}
D:B\times \hom \left( A,B\right) \longrightarrow \hom \left( A,B\right) \\
\left( t,f\right) \longmapsto \ ^{t}f
\end{gather*}%
and it is a straightforward calculation to check that%
\begin{eqnarray*}
D\left( 0,f\right) &=&f \\
D\left( t+t^{\prime },f\right) &=&D\left( t,D\left( t^{\prime },f\right)
\right).
\end{eqnarray*}%

Moreover, since condition $\left( \ref{Example 1 nat cond}\right) $ is
satisfied (see below), the 2-cell structure is natural.

\subsection{Abstract 2-cells, and conjugations\label{Example 1}}\label{ex9}

This example is motivated from the examples \ref{ex5}, \ref{ex6} and \ref{ex8}.

Let \textbf{C} be a category and consider
\begin{equation*}
H:\mathbf{C}^{op}\times \mathbf{C}\longrightarrow \Mon
\end{equation*}%
a functor into $\Mon$, the category of monoids, together with a natural
transformation%
\begin{equation*}
D:UH\times \hom _{\mathbf{C}}\longrightarrow \hom _{\mathbf{C}}
\end{equation*}%
(with $U:\Mon\longrightarrow \Set$ denoting the usual forgetful functor) satisfying%
\begin{eqnarray*}
D\left( 0,f\right) &=&f \\
D\left( x^{\prime }+x,f\right) &=&D\left( x^{\prime },D\left( x,f\right)
\right)
\end{eqnarray*}%
for all $f:A\longrightarrow B$ in \textbf{C} and\textbf{\ }$x^{\prime },x\in
H\left( A,B\right) $, where $0$ denotes the zero element in the monoid $H\left( A,B\right) $, which is
written in additive notation although it is not necessarily commutative.\newline
With this information, $(H,D)$, we can define a 2-cell structure over \textbf{C}, which has 2-cells displayed as
\begin{equation*}
\cela {A} {f} {(x,f)} {D(x,f)} {B},%
\end{equation*}%
with vertical composition given as illustrated%
\begin{gather*}
\xymatrix{
A
  \ar@/^3pc/[rr]^f="1"
  \ar[rr]|{D(x,f)}="2"
  \ar@/_3pc/[rr]_{D(x',D(x,f)) }="3"
 &  & B
 \\
\ar@{=>}|{(x,f) } "1";"2"
\ar@{=>}|{(x',D(x,f)) } "2";"3"
}
\\
\left( x^{\prime },D\left( x,f\right) \right) +\left( x,f\right) =\left(
x^{\prime }+x,f\right),
\end{gather*}%
which is well defined because $D\left( x^{\prime }+x,f\right) =D\left( x^{\prime
},D\left( x,f\right) \right) $. The identity 2-cells are of the form%
\begin{equation*}
\cela {A} {f} {(0,f)} {f} {B}%
\end{equation*}%
which are well defined because $D\left( 0,f\right) =f$. The left and right actions
of the morphisms in the 2-cells,%
\begin{equation*}
\xymatrix{
A'
\ar[r]^h
& A
  \ar@/^1pc/[r]^f="1"
  \ar@/_1pc/[r]_{D(x,f) }="2"
 &   B
  \ar[r]^g
 & B' \\
\ar@{=>}|{(x,f) } "1";"2"}
\end{equation*}%
are given by the formulas
\begin{equation*}
g\left( x,f\right) h=\left( gxf,gfh\right) =\left( H\left( h,g\right) \left(
x\right) ,gfh\right) .
\end{equation*}%

If in addition%
\begin{equation}
D\left( y,g\right) x+yf=yD\left( x,f\right) +gx , \label{Example 1 nat cond}
\end{equation}%
for all $x,y,f,g$ as pictured
\begin{equation*}
\xymatrix{
A
  \ar@/^1pc/[r]^f="1"
  \ar@/_1pc/[r]_{D(x,f) }="2"
 &  B
  \ar@/^1pc/[r]^g="3"
  \ar@/_1pc/[r]_{D(y,g) }="4"
 & C \\
\ar@{=>}|{(x,f) } "1";"2"
\ar@{=>}|{(y,g) } "3";"4"},
\end{equation*}%
then the 2-cell structure is natural and the result is a 2-category.

\subsection{Abstract 2-cells, and derivations\label{Example 2}}\label{ex10}

This example is motivated from the example of crossed modules with derivations, as described in  \ref{ex11}.

Here $\C$ has to be a category for which the functor%
\begin{equation*}
\hom _{\mathbf{C}}:\mathbf{C}^{op}\times \mathbf{C}\longrightarrow \Set
\end{equation*}%
can be extended into $\Mon$, that is, there is a functor (denoted by $\map$ --- we are thinking  in the underlying map of a homomorphism)
\begin{equation*}
\map:\mathbf{C}^{op}\times \mathbf{C}\longrightarrow \Mon\overset{U}{%
\longrightarrow }\Set
\end{equation*}%
with $\hom \subseteq U\map$, in the sense that $\hom \left( A,B\right)
\subseteq U(\map\left( A,B\right) )$, naturally for every $A,B\in \mathbf{C}$.

When this is the case then any functor%
\begin{equation*}
K:\mathbf{C}^{op}\times \mathbf{C}\longrightarrow \Mon
\end{equation*}%
together with a natural transformation%
\begin{equation*}
D:K\longrightarrow \map,
\end{equation*}%
determines a 2-cell structure over $\C$ as follows.
The functor $H:\mathbf{C}^{op}\times \mathbf{C}\longrightarrow \Set$
 is given by
\begin{eqnarray*}
H\left( A,B\right) &=&\left\{ \left( x,f\right) \in UK\left( A,B\right)
\times \hom \left( A,B\right) \mid D\left( x\right) +f\in \hom \left(
A,B\right) \right\} \\
H\left( h,g\right) \left( x,f\right) &=&\left( gxh,gfh\right)
\end{eqnarray*}%
while $\dom ,\cod ,0,+,$ are given as illustrated 
\begin{equation*}
\xymatrix{
A
  \ar@/^2pc/[rr]^f="1"
  \ar@/_2pc/[rr]_{D(x)+f }="2"
 & &  B,
 \\
\ar@{=>}|{(x,f) } "1";"2"
}%
\end{equation*}%
with $\left( x,f\right) \in H\left( A,B\right)$; the formula for the vertical composition is $$\left( x^{\prime },D\left( x\right) +f\right) +\left(
x,f\right) =\left( x^{\prime }+x,f\right); $$ the 
identity cells are of the form $$\left( 0,f\right); $$ while the left and right actions are computed as $$g\left( x,f\right) h=\left( gxh,gfh\right). $$

If in addition the property
\begin{equation}
D\left( y\right) x+gx+yf=yD\left( x\right) +yf+gx  \label{cond *1}
\end{equation}%
is satisfied for all $\left( x,f\right) \in H_{}\left( A,B\right) $ and $%
\left( y,g\right) \in H_{}\left( A,C\right) $, then the resulting structure
is a 2-category.

\subsection{The case of crossed modules}\label{ex11}

In the case \textbf{C}=X-Mod, the category of crossed modules, we have
the canonical 2-cell structure given by derivations, which is an instance
of the above construction with $\Grp$ instead of $\Mon$:

The objects in X-Mod are of the form%
\begin{equation*}
A=\left( X\overset{d}{\longrightarrow }B,\varphi :B\longrightarrow Aut\left(
X\right) \right)
\end{equation*}%
in which $d:X\longrightarrow B$ is a group homomorphism, $\varphi$ is a group
action of $B$ in $X$, denoted by $\varphi(b)(x)=b\cdot x$, and satisfying
\begin{eqnarray*}
d\left( b\cdot x\right) &=&bd\left( x\right) b^{-1} \\
d\left( x\right) \cdot x^{\prime } &=&x+x^{\prime }-x.
\end{eqnarray*}%
A morphism $f:A\longrightarrow A^{\prime }$ in X-Mod is of the form%
\begin{equation*}
f=\left( f_{1},f_{0}\right)
\end{equation*}%
with $f_{1}:X\longrightarrow X^{\prime }$ and $f_{0}:B\longrightarrow
B^{\prime }$  group homomorphisms such that
\begin{equation*}
f_{0}d=d^{\prime }f_{1}
\end{equation*}%
and
\begin{equation*}
f_{1}\left( b\cdot x\right) =f_{0}\left( b\right) \cdot f_{1}\left( x\right)
.
\end{equation*}%
Clearly there are functors%
\begin{equation*}
\map:\mathbf{C}^{op}\times \mathbf{C}\longrightarrow \Grp
\end{equation*}%
sending $\left( A,A^{\prime }\right) $ to the group of pairs $\left(
f_{1},f_{0}\right) $ of maps (not necessarily homomorphisms) $%
f_{1}:UX\longrightarrow UX^{\prime }$ and $f_{0}:UB\longrightarrow
UB^{\prime }$ such that%
\begin{equation*}
f_{0}d=d^{\prime }f_{1},
\end{equation*}%
with the group operation defined componentwise%
\begin{equation*}
\left( f_{1},f_{0}\right) +\left( g_{1},g_{0}\right) =\left(
f_{1}+g_{1},f_{0}+g_{0}\right) .
\end{equation*}%
Moreover, there is a functor%
\begin{equation*}
M:\mathbf{C}^{op}\times \mathbf{C}\longrightarrow \Grp
\end{equation*}%
sending $\left( A,A^{\prime }\right) $ to the group $M\left( A,A^{\prime
}\right) =\left\{ t\mid t:UB\longrightarrow UX^{\prime }\ \text{is a map}%
\right\} $, and a natural transformation%
\begin{equation*}
D:M\longrightarrow \map
\end{equation*}%
defined by
\begin{equation*}
D\left( A,A^{\prime }\right) \left( t\right) =\left( td,dt\right) .
\end{equation*}%
With this data we define $H\left( A,A^{\prime }\right) $ as%
\begin{equation*}
\left\{ \left( t,f\right) \mid t\in M\left( A,A^{\prime }\right) \ ,\
f=\left( f_{1},f_{0}\right) :A\longrightarrow A^{\prime }\ ,\ \left(
td+f_{1},dt+f_{0}\right) \in \hom \left( A,A^{\prime }\right) \right\} .
\end{equation*}%
It is well known that the map $t:B\longrightarrow X^{\prime }$ is such that
\begin{equation*}
t\left( bb^{\prime }\right) =t\left( b\right) +f_{0}\left( b\right) \cdot
t\left( b^{\prime }\right) \ \ ,\ \ \text{for all }b,b^{\prime }\in B\text{,}
\end{equation*}%
while $\left( td+f_{1},dt+f_{0}\right) \in \hom \left( A,A^{\prime }\right) $
asserts that the pair $\left( td+f_{1},dt+f_{0}\right) $ is a morphism of
crossed modules%
\begin{equation}
\quadrado
{ X  }       { d  }     {   B  }
{ td+f_1  }                  {  dt+f_0   }
{ X'  }       {  d }     {    B' }
\label{square}
\end{equation}%
and it is equivalent to

\begin{itemize}
\item $dt+f_{0}$ is a homomorphism of groups%
\begin{equation*}
dt\left( bb^{\prime }\right) =d\left( t\left( b\right) +f_{0}\left( b\right)
\cdot t\left( b^{\prime }\right) \right)
\end{equation*}

\item $td+f_{1}$ is a homomorphism of groups%
\begin{equation*}
t\left( d\left( x\right) d\left( x^{\prime }\right) \right) =t\left(
dx\right) +f_{0}d\left( x\right) \cdot td\left( x^{\prime }\right)
\end{equation*}

\item the square $\left( \ref{square}\right) $ commutes, which is trivial
because $\left( f_{1},f_{0}\right) \in \hom \left( A,A^{\prime }\right) $

\item $\left( td+f_{1}\right) $ preserves the action of $\left(
dt+f_{0}\right) $%
\begin{equation*}
t\left( bd\left( x\right) b^{-1}\right) =t\left( b\right) +f_{0}\left(
b\right) \cdot t\left( d\left( x\right) \right) +f_{0}\left( bd\left(
x\right) b^{-1}\right) \cdot \left( -t\left( b\right) \right) .
\end{equation*}
\end{itemize}
Thus, this is an instance of the example presented in \ref{ex10}. This particular example is further explored in \cite{NMF-bicat} where the description of pseudocategory given in Theorem~\ref{thm WMC} is further detailed.

\subsection{Abstract 2-cells and homotopies\label{Remark20}}\label{ex12}

A particular (but important) case, which is obtained from the more general example presented in \ref{ex10}, is when $\mathbf{C}$ is an Ab-category. A 2-Ab-category, as defined
in \cite{Ch5} and \cite{Ch6}, is obtained in this way. In that case the
functor $\hom $ coincides with $\map$ and moreover it is a functor into $\Ab$, the category of abelian groups.
\begin{equation*}
\hom :\mathbf{C}^{op}\times \mathbf{C}\longrightarrow \Ab
\end{equation*}%
This means that giving a 2-cell structure on $\C$ is to give a functor  $$H:\mathbf{C}^{op}\times \mathbf{C}\longrightarrow \Ab,$$which is usually required to be an Ab-functor, together with a
natural transformation $D:H\longrightarrow \hom $. The 2-cell structure thus obtained
makes $\mathbf{C}$ into a 2-category (in fact a 2-Ab-category) if and only if the
condition  $D\left( y\right) x=yD\left( x\right) $ is satisfied for every appropriate $x$ and $y$. This condition is just how the condition $\left( \ref{cond *1}\right) $ simplifies in this context, see \cite{Ch5} and \cite{Ch6} for more details. Furthermore,
as proved in \cite{Ch5}, every 2-cell structure (if enriched in $\Ab$) is
obtained in this way.

Many of these considerations are still valid for an arbitrary monoidal category $%
\mathbf{V}$ instead of $\Ab$, but in general not all 2-cell structures are obtained in this way.

\subsection{The case of Abelian Chain Complexes}\label{ex13}

We now consider an example of a category with a \emph{canonical} 2-cell structure in which not every 2-cell is a natural 2-cell (see Definition \ref{def natcell}).

The example of abelian chain complexes (say of order 2 to simplify notation)
is self explanatory (see also \cite{Bourn},\cite{BR} and references there).
We have objects, morphisms and 2-cells (homotopies) as displayed%
\begin{equation}
\xymatrix{ A_2 \ar[r]^{d} \ar@<.5ex>[d]^{g_2} \ar@<-.5ex>[d]_{f_2} & A_1
\ar[r]^{d} \ar@<.5ex>[d]^{g_1} \ar@<-.5ex>[d]_{f_1} \ar[ld]_{t_2} & A_0
\ar@<.5ex>[d]^{g_0} \ar@<-.5ex>[d]_{f_0} \ar[ld]_{t_1} \\ A'_2 \ar[r]^{d} &
A'_1 \ar[r]^{d} & A'_0 }  \label{diagram t1.t2}
\end{equation}%
with%
\begin{eqnarray*}
dt_{1} &=&g_{0}-f_{0} \\
t_{1}d+dt_{2} &=&g_{1}-f_{1} \\
t_{2}d &=&g_{2}-f_{2}
\end{eqnarray*}%
or equivalently%
\begin{eqnarray*}
g_{0} &=&dt_{1}+f_{0} \\
g_{1} &=&t_{1}d+dt_{2}+f_{1} \\
g_{2} &=&t_{2}d+f_{2}
\end{eqnarray*}%
and hence we have, for $\mathbf{C}=$2-Ch($\Ab$), the functor%
\begin{equation*}
H:\mathbf{C}^{op}\times \mathbf{C}\longrightarrow \Ab
\end{equation*}%
sending the pair of objects $\left( A,A^{\prime }\right) $ to the abelian
group of pairs $\left( t_{2},t_{1}\right) $; and the natural transformation%
\begin{equation*}
D:H\longrightarrow \hom
\end{equation*}%
sending a pair $\left( t_{2},t_{1}\right) $ as above to the triple $\left(
t_{2}d,t_{1}d+dt_{2},dt_{1}\right) $ displayed as follows%
\begin{equation*}
\xymatrix{ A_2 \ar[r]^{d} \ar[d]_ {t_2 d} & A_1 \ar[r]^{d} \ar[d]| {t_1 d+d
t_2} & A_2 \ar[d]_ {d t_1} \\ A'_2 \ar[r]^{d} &A'_1 \ar[r]^{d} & A'_0 }.
\end{equation*}%
This is clearly an instance of the construction used in examples \ref{ex10} and \ref{ex13}, however, condition%
\begin{equation*}
D(x)y=xD(y)
\end{equation*}%
is not always satisfied, since it becomes, for $x=\left(
t_{2},t_{1}\right) $ and $y=\left( s_{2},s_{1}\right) $%
\begin{equation*}
\left( t_{2}ds_{2},dt_{2}s_{1}+t_{1}ds_{1}\right) =\left(
t_{2}ds_{2}+t_{2}s_{1}d,t_{1}ds_{1}\right)
\end{equation*}%
which holds if $t_{2}s_{1}=0$, but not in general.

The commutator (see Definition \ref{def commutator}) in this case is given by the formula%
\begin{equation*}
\lbrack x,y]=\left( -t_{2}s_{1}d,dt_{2}s_{1}\right),
\end{equation*}
whenever it is defined.

\subsection{A slightly more general example}\label{ex14}

This is still a slightly further generalization of the example used in  \ref{ex10}.

Suppose  $\mathbf{A}$ is a category admitting a \emph{forgetful} functor into
$\Set$, say $U:\mathbf{A}\longrightarrow \Set$, and let us assume the existence of a functor%
\begin{equation*}
\map:\mathbf{C}^{op}\times \mathbf{C}\longrightarrow \mathbf{A}
\end{equation*}%
such that
\begin{equation*}
\hom _{C}\left( A,B\right) \subseteq U(\map\left( A,B\right))
\end{equation*}%
as detailed  in Subsection \ref{ex10} but with a generic $\mathbf{A}$ instead of $\Mon$.

 Once having such data we
may be interested in considering the 2-cell structures over $\mathbf{C}$
that are \emph{loosely enriched} in $\mathbf{A}$ in the same way
as $\mathbf{C}$ is. To do that we consider the 2-cell
structures given by%
\begin{equation*}
H\left( A,B\right) =\left\{ x\in UM\left( A,B\right) \mid U\dom x,U%
\cod x\in \hom \left( A,B\right) \right\}
\end{equation*}%
with $M,\dom ,\cod $ being part of an internal category object
in $\mathbf{A}^{\mathbf{C}^{^{op}}\times \mathbf{C}}$, of the form%
\begin{equation*}
M\times _{\map}M\overset{+}{\longrightarrow }M%
\begin{array}{c}
\underrightarrow{\dom } \\
\overset{0}{\longleftarrow } \\
\overrightarrow{\cod }%
\end{array}%
\map,
\end{equation*}%
with the obvious restrictions after applying $U$.

It is now interesting to observe that the case $\mathbf{A}=\Mon$ is precisely the construction of Section \ref{Example 2}. Another interesting case is when  $\mathbf{A}=\Grp$. If $%
\mathbf{A}=\Ab$ and also asking for $M$ to be an Ab-functor, then the result
is a 2-Ab-category if also adding the condition%
\begin{equation*}
D\left( x\right) y=xD\left( y\right)
\end{equation*}%
for all appropriate $x$ and $y$ (example \ref{ex12}).

When $\mathbf{A}$ is a monoidal category and $\mathbf{C}$ a category enriched in $%
\mathbf{A}$ then we can always find the functor $\map$ in the conditions above by putting $\map=\hom $. 


\subsection{Topological Abelian Groups}\label{ex15}

When $\mathbf{C}=TAG$, is the category of topological abelian groups,  we can always consider  the
2-cell structure given as follows: for every topological abelian groups $X$ and $Y$, $H\left( X,Y\right)$ is the quotient set
\begin{equation*}
\left\{ \alpha :I\times X\longrightarrow Y\mid \alpha
\text{ is continuous, }\alpha \left( 0,\_\right) =0\text{ , }\alpha \left(
t,\_\right) \text{ is a homomorphism}\right\} /\sim
\end{equation*}%
in which $I$ denotes the unit interval and the equivalence $\sim $ identifies
\begin{equation*}
\alpha \sim \beta
\end{equation*}%
if and only if:
\begin{enumerate}
\item  $\alpha \left( 1,\_\right) =\beta \left( 1,\_\right) $, which is  referred to as $h$ in item 2. just below,
\item there exits $\Phi :I\times I\times X\longrightarrow Y$, continuous and such that%
\begin{eqnarray*}
\Phi \left( 0,\_,\_\right) &=&\alpha \\
\Phi \left( 1,\_,\_\right) &=&\beta \\
\Phi \left( s,0,\_\right) &=&0 \\
\Phi \left( s,1,\_\right) &=&h
\end{eqnarray*}%
\item and $\Phi \left( s,t,\_\right) $ is a homomorphism.
\end{enumerate}
The natural transformation $D:H\longrightarrow \hom $ is given by%
\begin{equation*}
D\left( [\alpha ]\right) =\alpha \left( 1,\_\right)
\end{equation*}%
with%
\begin{equation*}
\left( g[\alpha ]f\right) \left( t,x\right) =g\left( \alpha \left( t,f\left(
x\right) \right) \right) .
\end{equation*}%
It is clear that the condition
\begin{equation*}
\lbrack \alpha ]D\left( [\beta ]\right) =D\left( [\alpha ]\right) [\beta ]
\end{equation*}%
holds because%
\begin{equation*}
\alpha D\left( \beta \right) \sim D\left( \alpha \right) \beta
\Leftrightarrow \alpha \left( t,\beta \left( 1,x\right) \right) \sim \alpha
\left( 1,\beta \left( t,x\right) \right)
\end{equation*}%
and there exits
\begin{equation*}
\Phi \left( s,t,x\right) =\alpha \left( t^{\left( 1-s\right) },\beta \left(
t^{s},x\right) \right).
\end{equation*}

This means that we may use the previous information to build on TAG a 2-cell structure, as in example \ref{ex14}, knowing that the result is a 2-category.

Finally we observe that the notion of a category with a 2-cell structure, besides giving a simple
characterization of a 2-category as
\begin{equation*}
\text{\textquotedblleft 2-category"=\textquotedblleft
sesquicategory"+\textquotedblleft naturality condition",}
\end{equation*}%
it also gives a useful tool in generating examples for arbitrary situations.

\section{Morphisms between 2-cell structures, naturally invertible 2-cell structures and cartesian squares}\label{section 5}

For a fixed category $\C$, we consider the  category Cell(\textbf{C%
}) of all the possible 2-cell structures over $\C$, as well as its full subcategories NatCell($\C$), of natural 2-cell structures, and  InvCell($\C$), consisting of the 2-cell structures with invertible 2-cells. The
category Cell($\C$) has a initial object (the discrete 2-cell
structure) and a terminal object (the codiscrete 2-cell structure). In many cases (example \ref{ex7}, \ref{ex8}, \ref{ex11}, \ref{ex13}, etc.) a \emph{canonical non-trivial} 2-cell structure is also present. One particular case of our interest is $\C=\Cat(\mathbf{B})$.


\subsection{The category of 2-cell structures over a fixed category \textbf{C%
}}

In this section $\C$ is an arbitrary category which is fixed. 

The category Cell($\C$) has as objects the 2-cell structures over $\C$. If $$\mathbf{H}=( H,\dom ,\cod ,0,+)$$ and $$\mathbf{H'}=( H',\dom' ,\cod' ,0',+')$$ are two 2-cell structures over $\C$, then a morphism $\varphi :\mathbf{H}\longrightarrow \mathbf{H}^{\prime }$ is a
natural transformation
\begin{equation*}
\varphi :H\longrightarrow H^{\prime }
\end{equation*}%
such that
\begin{eqnarray*}
\dom ^{\prime }\varphi &=&\dom  \\
\cod ^{\prime }\varphi &=&\cod  \\
\varphi 0 &=&0^{\prime } \\
\varphi + &=&+^{\prime }\left( \varphi \times \varphi \right) .
\end{eqnarray*}%
We will sometimes write $H$ instead of $( H,\dom ,\cod ,0,+)$ to refer to a 2-cell structure.

The reason for describing Cell($\C$), the category of all 2-cell structures over $\C$, is the study of
pseudocategories internal to $\C$. A pseudocategory (Definition \ref{def pseudocat}), internal to 
$\C$, depends on the 2-cell structure which is being considered over $\C$. For
example, a pseudocategory in \textbf{C} with the codiscrete 2-cell structure
is a precategory, while if considering the discrete 2-cell structure it is
an internal category. It seems to be interesting, for a fixed
category $\C$, the study of the variation of a pseudocategory (from a
precategory to an internal category) as  the 2-cell structure considered over $\C$ varies from the initial to the terminal object in Cell($\C$). Another seemingly important aspect of Cell($\C$) is the following.
Every morphism%
\begin{equation}
\varphi :H\longrightarrow H^{\prime }  \label{phi}
\end{equation}%
in Cell(\textbf{C}) induces a functor%
\begin{equation}
PsCat\left( \mathbf{C},H\right) \longrightarrow PsCat\left( \mathbf{C}%
,H^{\prime }\right)  \label{induced phi}
\end{equation}%
from pseudocategories in \textbf{C}, relative to the 2-cell structure $H$, to
pseudocategories in \textbf{C}, relative to the 2-cell structure $H^{\prime }$.
We reserve for a future work the study of equivalent 2-cell
structures, by identifying $H$ and $H'$, via  $\left( \ref{phi}\right) $, 
whenever $\left( \ref{induced phi}\right) $ is an equivalence of categories, relating it with homotopy theory.

The notion of a pseudocategory, as defined in \cite{Ch1} (see also \cite{Ch5,Ch6}), rests on the
existence of some induced 2-cells between certain pullbacks. This means that those pullbacks have to share their universal property with morphisms and with 2-cells, thus the following definition.

\subsection{Cartesian $H$-squares}

We continue to assume that $\C$ is an arbitrary category, which is fixed, and now consider a functor $H\colon{\C^{op}\times \C\to \Set}$ for which we write $H(f,g)(x)$ as $gxf$. 

\begin{definition}[cartesian $H$-square]\label{Def: cartesian 2-cell structure, Ch2}\label{def cartesian} A commutative square \[\xymatrix{D\ar[r]^{\pi_2}\ar[d]_{\pi_1}&C\ar[d]^{g}\\A\ar[r]^{f}&B}\]in $\C$ is said to be $H$-cartesian if for every object $Z$ in $\C$, for every  $x\in H(Z,A)$ and $y\in H(Z,C)$ with $fx=gy$, there is a unique element $w\in H(z,D)$ such that $\pi_1w=x$ and $\pi_2w=y$. In that case we write $w$ as $\langle x,y\rangle$.
\end{definition}

An immediate observation is that when $H=\hom_{\C}$ then a square is $H$-cartesian if and only if it is a pullback square.

It is also not difficult to observe that when $H(D,-)$ preserves pullbacks for every object $D$ in $\C$, then every pullback square is $H$-cartesian. 

For that reason we will be interested in  considering 2-cell structures of which the functor $%
H\left( D,-\right) :\mathbf{C}\longrightarrow \Set$ preserves pullbacks for
every object $D$ in $\mathbf{C}$. This means that 
\begin{equation*}
H:\mathbf{C}^{op}\times \mathbf{C}\longrightarrow \Set,
\end{equation*}%
giving a 2-cell structure to a category $\mathbf{C}$, has the property that
\begin{equation*}
H\left( D,A\times _{\left\{ f,g\right\} }B\right) \overset{\varphi }{\cong }%
\left\{ \left( x,y\right) \in H\left( D,A\right) \times H\left( D,B\right)
\mid fx=gy\right\}
\end{equation*}%
for every object $D$ in $\mathbf{C}$ and pullback square
\begin{equation*}
\xymatrix{ A \times_C B \ar[r]^{ \pi_2 } \ar[d]_{ \pi_1 } & B \ar[d]^{ f }
\\ A \ar[r]^{ g } & C },
\end{equation*}%
with $\varphi $ a natural isomorphism, that is, for every
$h:D\longrightarrow D^{\prime }$, the following square commutes%
\begin{equation*}
\quadrado
{ H(D,A\times_C B)  }       { \cong \varphi  }     {   \{(x,y) \mid fx=gy \}  }
{ H(h,1)  }                  {     }
{  H(D',A\times_C B) }       { \cong \varphi  }     {  \{(x',y') \mid fx'=gy' \}   }%
\end{equation*}%
or, in other words, that%
\begin{equation*}
\left\langle x,y\right\rangle h=\left\langle xh,yh\right\rangle
\end{equation*}%
as displayed.
\begin{equation*}
\xymatrix{
D'
\ar[r]^{h}
& D
\ar@/^/@{=>}[rrd]^{y}
\ar@{==>}[dr]|-{<x,y>}
\ar@/_/@{=>}[ddr]_{x}
\\
& & A\times_C B
\ar[r]
\ar[d]
& B
\ar[d]^g
\\
& & A
\ar[r]_f
& C
\\
}
\end{equation*}

In particular, for $D=A^{\prime }\times _{C^{\prime }}B^{\prime }$, and
appropriate $x,y,z$ as in%
\begin{equation*}
\xymatrix{
A'
\ar@{=>}[d]_x
& A'\times_{C'} B'
\ar[l]_{\pi'_1}
\ar[r]^{\pi'_2}
\ar@{=>}[d]|{x\times_z y}
& B'
\ar@{=>}[d]^y
& , & C'
\ar@{=>}[d]^z
\\
A
& A\times_C B
\ar[l]_{\pi_1}
\ar[r]^{\pi_2}
& B
& , & C
\\
},
\end{equation*}%
the element $x\times _{z}y$ appears as the unique 2-cell in $H\left(
A^{\prime }\times _{C^{\prime }}B^{\prime },A\times _{C}B\right) $ such that
\begin{eqnarray*}
\pi _{2}\left( x\times _{z}y\right) &=&y\pi _{2}^{\prime } \\
\pi _{1}\left( x\times _{z}y\right) &=&y\pi _{1}^{\prime }.
\end{eqnarray*}

These observations will be used in Definition  \ref{def pseudocat}, saying what a pseudocategory internal to a category with a 2-cell structure is.
%

The two other important aspects of a 2-cell structure, that are needed for the definition of pseudocategory, are the naturality and the invertibleness of the 2-cells in the 2-cell structure. Hence we are continuing with this analysis before going to the next section and define a internal pseudocategory, which is one of the main purposes of this work.

%

\subsection{Natural and invertible 2-cells on a 2-cell structure}

Recall from Proposition \ref{Prop. 2-category as a cat with 2-cell stru + naturality} that a category $%
\mathbf{C}$ with a 2-cell structure%
\begin{equation*}
\mathbf{H}=\left( H,\dom ,\cod ,0,+\right) ,
\end{equation*}%
is a 2-category if and only if the naturality condition $(\ref{Naturality condition for x,y})$
\begin{equation}
\cod \left( x\right) y+x\dom \left( y\right) =\smallskip x\func{cod%
}\left( y\right) +\dom \left( x\right) y  \nonumber
\end{equation}%
holds for every $x\in H\left( B,C\right) ,\ y\in H\left( A,B\right) ,$ and
every triple of objects $\left( A,B,C\right) $ in $\mathbf{C}$, as displayed
\begin{equation*}
\xymatrix{
A
  \ar@/^1pc/[r]^{\dom y}="1"
  \ar@/_1pc/[r]_{\cod y }="2"
 &  B
  \ar@/^1pc/[r]^{\dom x}="3"
  \ar@/_1pc/[r]_{\cod x }="4"
 & C \\
\ar@{=>}^{ y } "1";"2"
\ar@{=>}^{ x } "3";"4"}.
\end{equation*}
 
It may happen that the naturality condition does not hold for all the
possible  $x$ and $y$, but only for a few (as it is the case in \ref{ex6}, \ref{ex7}, \ref{ex9}, etc.). Thus the following definitions sprung.

Let $\mathbf{C}$ be a category and $\left( H,\dom ,\cod %
,0,+\right) $ a 2-cell structure over it.

\begin{definition}\label{def nat2cellwrt}
A 2-cell $y \in H\left( A,B\right) $ is said to be \emph{natural with respect to}
a 2-cell $z\in H\left( X,A\right) $, if
\begin{equation*}
\cod \left( y \right) z+y \dom \left( z\right) =y
\cod \left( z\right) +\dom \left( y \right) z.
\end{equation*}%
\end{definition}
When that is the case it means that the usual horizontal composition (or pasting) $y \circ z$ is well defined and it is given by each one of the formulas involved in the naturality condition.
\begin{definition}\label{def natcell}
A 2-cell $x\in H\left( A,B\right) $ is natural when%
\begin{equation}
\cod \left( x\right) y+x\dom \left( y\right) =x\cod \left(
y\right) +\dom \left( x\right) y  \label{eq_a}
\end{equation}%
for every element $y\in H\left( X,A\right) $ for every object $X$ in \textbf{%
C}.
\end{definition}
In other words, a 2-cell $x$ is natural when it is natural with respect to all the possible 2-cells with which it makes sense to horizontally compose it.
\begin{definition}\label{def invcell}

A 2-cell $x\in H\left( A,B\right) $ is invertible when there is a
(necessarily unique) element%
\begin{equation*}
-x\in H\left( A,B\right)
\end{equation*}%
such that $\dom \left( x\right) =\cod \left( -x\right) \ ,\ \func{%
cod}\left( x\right) =\dom \left( -x\right) $ and
\begin{equation*}
x+\left( -x\right) =0_{\cod \left( x\right) }\ ,\ \left( -x\right)
+x=0_{\dom \left( x\right) }.
\end{equation*}%
\end{definition}
The notion of a 2-cell, being natural with respect to another 2-cell, in the case when both the 2-cells are invertible, can be conveniently translated into the notion of a binary commutator, which is borrowed from the usual notion of commutator, in the sense of Group Theory.
\begin{definition}\label{def commutator} Let $x\in H(B,C)$ and $y\in H(A,B)$ be two invertible 2-cells in a 2-cell structure. The commutator 2-cell of $x$ and $y$, denoted by $[x,y]\in H(A,C)$
\begin{equation*}
\xymatrix{
A
  \ar@/^2pc/[rr]^{}="1"
  \ar@/_2pc/[rr]_{ }="2"
 & &  B
  \ar@/^2pc/[rr]^{}="3"
  \ar@/_2pc/[rr]_{ }="4"
 & & C \\
\ar@{=>}^{y } "1";"2"
\ar@{=>}^{x } "3";"4"}
\end{equation*}%
is given by the formula
\begin{eqnarray*}
\lbrack x,y] &=&\left( c_{1}+d_{2}-d_{1}-c_{2}\right) \left( x,y\right)  \\
&=&c_{1}\left( x,y\right) +d_{2}\left( x,y\right) -d_{1}\left( x,y\right)
-c_{2}\left( x,y\right)
\end{eqnarray*}%
with
\begin{eqnarray*}
c_{1}\left( x,y\right)  &=&\cod \left( x\right) y\ ,\ c_{2}\left(
x,y\right) =x\cod \left( y\right)  \\
d_{1}\left( x,y\right)  &=&\dom \left( x\right) y\ ,\ d_{2}\left(
x,y\right) =x\dom \left( y\right) .
\end{eqnarray*}%
\end{definition}

The comparison of the commutator with the 2-cell $0_{\cod \left( x\right) \cod \left(
y\right) }$ tell us the obstruction that $x$ and $y$ offer to be horizontally composed. In other words, the commutator vanishes, \[[x,y]=0_{\cod \left( x\right) \cod \left(
y\right) },\] if and only if $x$ is natural with respect to $y$.


%

We may thus consider the full subcategories of Cell($\C$) by taking the 2-cell structures where all the 2-cells are natural or invertible or both, denoted, respectively, by NatCell($\C$), InvCell($\C$) and NatInvCell($\C$).

In the simplest case, when $\mathbf{C}$=$\mathbf{1}$, considered in example \ref{ex1}, we have that \[\text{Cell($\C$)}=\Mon,\] NatCell($\C$) is the category of commutative monoids and InvCell($\C$) the category of groups. The case NatInvCell($\C$) being abelian groups.

This observation suggests the study of a generalization for  the well known reflection%
\begin{equation*}
\Grp\overset{I}{\longrightarrow }\Ab,
\end{equation*}
namely
\begin{equation*}
I\colon{\text{Cell($\C$)}\to \text{NatCell($\C$)}}
\end{equation*}%
from the category of 2-cell structures over $\mathbf{C}$, into the
subcategory of natural 2-cell structures over $\mathbf{C}$, associating to each
2-cell structure its \emph{naturalization}. This study is postponed for a future work. A simple observation may however be given in the case of $\mathbf{C}$ being an Ab-category (see \cite%
{Ch5},\cite{Ch6} and Section \ref{Remark20} above) which is that in this case the notion of commutator
reduces to%
\begin{equation*}
\lbrack x,y]=D\left( x\right) y-xD\left( y\right) .
\end{equation*}%

We shall now see how the notions of naturallity are related when $\C$ is of the form $\Cat\left( \mathbf{B}\right)$, for some category $%
\mathbf{B}$, with the 2-cell structure given by the internal (natural) transformations.

Before stating the result we refer to Section \ref{section2} observing that, for every internal category $A=\left( A_{0},A_{1},d,c,e,m\right)$,
there exists an internal category $$A^{\rightarrow }=\left( A_{1},A_{1},1,1,1,1\right),$$ which is in some sense the category of arrows of $A$, together with two internal functors
\begin{equation*}
d^{\rightarrow }=\left( ed,d\right) :A^{\rightarrow }\longrightarrow A
\end{equation*}%
and
\begin{equation*}
c^{\rightarrow }=\left( ec,c\right) :A^{\rightarrow }\longrightarrow A,
\end{equation*}
of which we may think as the domain and codomain functors.

We are now referring to the example of \ref{ex7}, where we say what does it mean for an internal transformation to be natural with respect to another. Then we prove that an internal transformation is natural if and only if it is natural with respect to the \emph{arrow} transformation, and at the end we show that every internal natural transformation is a natural 2-cell.

\begin{proposition}\label{prop_naturality of transformations}
Let $\B$ be an arbitrary category and take $\C=\Cat(\B)$ with its 2-cell structure of internal transformations as in \ref{ex7}. A 2-cell $t \in H\left( A,B\right) $ is natural 
if and only if it is natural with respect to the 2-cell
\begin{equation*}
( 1_{A_{1}},ed,ec) \in L\left(
A^{\rightarrow },A\right)\cong H\left(
A^{\rightarrow },A\right).
\end{equation*}
\end{proposition}

\begin{proof} Let us first observe what does it mean for $\mathbf{t}$ to be natural with respect to an arbitrary appropriate 2-cell $\mathbf{z}$. 
Suppose $\mathbf{t}=\left( t,h_1,k_1\right) \in L(A,B)\cong H\left( A,B\right) $ and $
\mathbf{z}=\left( z,f_1,g_1\right) \in L(X,A)\cong H\left( X,A\right)$ as illustrated,
\begin{equation*}
\xymatrix{
... & X_1
\ar@<.5ex>[r]^{}
\ar@<-.5ex>[r]^{}
\ar@<.5ex>[d]^{g_1}
\ar@<-.5ex>[d]_{f_1}
& X_0
\ar@<.5ex>[d]^{g_0}
\ar@<-.5ex>[d]_{f_0}
\ar[ld]_{z}
\ar[l]
\\
... & A_1
\ar@<.5ex>[r]^{}
\ar@<-.5ex>[r]^{}
\ar@<.5ex>[d]^{k_1}
\ar@<-.5ex>[d]_{h_1}
& A_0
\ar@<.5ex>[d]^{k_0}
\ar@<-.5ex>[d]_{h_0}
\ar[ld]_{t}
\ar[l]
\\
... & B_1
\ar@<.5ex>[r]^{}
\ar@<-.5ex>[r]^{}
& B_0
\ar[l]}
\end{equation*}%
then by the definition of a relation of horizontal composition (see Definition \ref{def nat2cellwrt}) we have that $\mathbf{t}\circ \mathbf{z}$ is equivalent to
\begin{equation}
 \left( k_{1}z,k_1f_1,k_1g_1\right)
+\left( tf_{0},h_1f_1,k_1f_1\right) =\left( tg_{0},h_1g_1,k_1g_1\right) +\left(
h_{1}z,h_1f_1,h_1g_1\right)  \notag 
\end{equation}
which simplifies to
\begin{equation}\notag
\left( m\left\langle k_{1}z,tf_{0}\right\rangle
,h_1f_1,k_1g_1\right) = \left( m\left\langle tg_{0},h_{1}z\right\rangle ,h_1f_1,k_1g_1\right)
\end{equation}%
which further simplifies to
\begin{equation}
m\left\langle k_{1}z,tf_{0}\right\rangle =m\left\langle
tg_{0},h_{1}z\right\rangle . \label{m<k1 z,t fo>=m<t go,h1 z>}
\end{equation}
Moreover, again by definition, $t$ is an internal natural transformation when%
\begin{equation}
m\left\langle k_{1},td\right\rangle =m\left\langle tc,h_{1}\right\rangle
\label{m<k1,td>=m<tc,h1>}
\end{equation}%
which is equivalent to saying that $\left( t,h_1,k_1\right) $ is natural
relative to $\left( 1_{A_{1}},ed,ec\right) ,$ as
displayed below%
\begin{equation*}
\xymatrix{
... & A_1
\ar@{=}[r]^{}
\ar@<.5ex>[d]^{ec}
\ar@<-.5ex>[d]_{ed}
& A_1
\ar@<.5ex>[d]^{c}
\ar@<-.5ex>[d]_{d}
\ar[ld]_{1}
\\
... & A_1
\ar@<.5ex>[r]^{}
\ar@<-.5ex>[r]^{}
& A_0
\ar[l]}.
\end{equation*}
\end{proof}

\begin{corollary}\label{corolary1}
Every internal natural transformation is a natural 2-cell.
\end{corollary}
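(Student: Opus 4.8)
The plan is to reduce the corollary to the two computations already carried out in the proof of the preceding proposition, and then to obtain the general case from the internal‑naturality identity by a single precomposition.

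First I would recall what the two relevant conditions say. By the chain of equivalences in the previous proof, a 2-cell $\mathbf{t}=\left( k,t,h\right) \in H\left( A,B\right) $ is an internal natural transformation exactly when
\[
m\langle k_{1},td\rangle = m\langle tc,h_{1}\rangle
\]
as morphisms $A_{1}\longrightarrow B_{1}$ in $\mathbf{B}$. Likewise, for an arbitrary 2-cell $\mathbf{z}=\left( g,z,f\right) \in H\left( X,A\right) $, the relation $\mathbf{t}\circ \mathbf{z}$ unwinds (again as in that proof) to
\[
m\langle k_{1}z,tf_{0}\rangle = m\langle tg_{0},h_{1}z\rangle .
\]
Thus the corollary amounts to the implication that the first identity forces the second, for every object $X$ and every $\mathbf{z}\in H\left( X,A\right) $.

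The key step is precomposition with $z$. By the defining membership condition for $H\left( X,A\right) $ we have $z:X_{0}\longrightarrow A_{1}$ with $dz=f_{0}$ and $cz=g_{0}$. Precomposing the internal‑naturality identity with $z$, and using that the pairing into the pullback $B_{1}\times _{B_{0}}B_{1}$ and the composition map $m$ are ordinary morphisms of $\mathbf{B}$ — so that $\langle a,b\rangle z=\langle az,bz\rangle $ by the universal property of the pullback and $m$ commutes with this precomposition — yields $m\langle k_{1}z,tdz\rangle = m\langle tcz,h_{1}z\rangle $. Substituting $dz=f_{0}$ and $cz=g_{0}$ turns this into precisely $m\langle k_{1}z,tf_{0}\rangle = m\langle tg_{0},h_{1}z\rangle $, i.e.\ $\mathbf{t}\circ \mathbf{z}$. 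Since $X$ and $\mathbf{z}$ were arbitrary, $\mathbf{t}$ is natural with respect to every 2-cell, hence a natural 2-cell.

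Conceptually, the distinguished 2-cell $\left( c^{\rightarrow },1_{A_{1}},d^{\rightarrow }\right) $ of the proposition is \emph{generic}: its middle component is the identity $1_{A_{1}}$ on the object of arrows, and the middle component $z$ of an arbitrary $\mathbf{z}$ is just that identity reparametrized along $z:X_{0}\longrightarrow A_{1}$. Naturality is stable under such reparametrization, which explains why naturality with respect to the single generic 2-cell already forces naturality with respect to all of them. I do not expect a genuine obstacle here; the only point requiring care is the bookkeeping that precomposition by $z$ commutes with the pairing into $B_{1}\times _{B_{0}}B_{1}$ and with $m$, and that the composability condition on the two factors is preserved — both immediate from the universal property of the pullback and associativity of composition in $\mathbf{B}$. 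I would also take care to apply $dz=f_{0}$ and $cz=g_{0}$ to the correct factor in each product.
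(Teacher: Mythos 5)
Your proposal is correct and is essentially identical to the paper's own proof: both precompose the internal-naturality identity $m\langle k_{1},td\rangle =m\langle tc,h_{1}\rangle$ with $z$, use $\langle a,b\rangle z=\langle az,bz\rangle$, and substitute $dz=f_{0}$, $cz=g_{0}$ to obtain $m\langle k_{1}z,tf_{0}\rangle =m\langle tg_{0},h_{1}z\rangle$, i.e.\ $\mathbf{t}\circ \mathbf{z}$ for arbitrary $\mathbf{z}$. The closing remark about the ``generic'' 2-cell is a nice conceptual gloss but adds nothing the computation does not already establish.
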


\begin{proof}
Simply observe that
\begin{equation*}
\left( \ref{m<k1,td>=m<tc,h1>}\right) \Longrightarrow \left( \ref{m<k1 z,t
fo>=m<t go,h1 z>}\right)
\end{equation*}%
since%
\begin{eqnarray*}
m\left\langle k_{1},td\right\rangle z &=&m\left\langle tc,h_{1}\right\rangle
z \\
m\left\langle k_{1}z,tdz\right\rangle &=&m\left\langle
tcz,h_{1}z\right\rangle \\
m\left\langle k_{1}z,tf_{0}\right\rangle &=&m\left\langle
tg_{0},h_{1}z\right\rangle .
\end{eqnarray*}
\end{proof}

In this case there is a simple criteria which detects whether a 2-cell is natural or not without the need of comparing it with all the possible 2-cells. This seems to be an intrinsic phenomenon for this particular 2-cell structure in this particular type of category.

\section{Pseudocategories}\label{section 6}

The notion of pseudocategory (as considered in \cite{Ch1}) is only defined internally to a 2-category. Here we extend it to the more general context of a category with a 2-cell structure (or sesquicategory).

In any category $\mathbf{C}$, it is always possible to consider two trivial 2-cell structures (see Definition \ref{def 2cellstruct}), namely the discrete one, which is obtained when $H=\hom $
and $\dom ,\cod ,0,+$ are all identities, and the codiscrete one, which is obtained when $H=\hom \times \hom $, $\dom $ is the second projection, $%
\cod $ is the first projection, $0$ is the diagonal and $+$ is uniquely
determined. As already observed, a pseudocategory in the first case is an internal category to $%
\mathbf{C}$, while in the second case it is simply a precategory in $\mathbf{C}$.

When $\mathbf{C}=\Cat(\Set)$, and choosing for the 2-cell structure the natural transformations, then a pseudocategory is a pseudo-double-category in the sense of Grandis and Par\'e (see \cite{Grandis-Pare-01,Grandis-Pare-02}), which is at the same time a
generalization of a double-category and a bicategory.

At this level of generality, it appears that there is no particular reason why to prefer a specific 2-cell structure instead of another. For instance, Top, the category of topological spaces, is usually considered with the 2-cell structure which is obtained from the homotopy classes of homotopies, but there are perhaps others which could have been considered as well.

We are now going to extend the notion of pseudocategory onto its full generality.

Let $\mathbf{C}$ be an arbitrary category with
 $\left( H,\dom ,\cod ,0,+\right) $ a 2-cell structure defined over it.

Recall from Section \ref{section2} that an internal precategory in $\C$ is a system $$(C_0,C_1,C_2,d,c,e,m,\pi_1,\pi_2)$$ with $C_{0},C_{1},C_2$ objects and $d,c,e,m,\pi_1,\pi_2$ morphisms in $\C$, displayed as%
\begin{equation*}
\xymatrix{ C_2 \ar@<1.5ex>[r]^{\pi_2}\ar@<-1.5ex>[r]_{\pi_1}\ar[r]|{m} & C_1 \ar@<1ex>[r]^{d} \ar@<-1ex>[r]_{d} & C_0
\ar[l]|{e} },
\end{equation*}
such that the conditions (PC1), (PC2) and (PC3) are satisfied. We now need a reformulation of (PC3), denoted by (PC3*), to be used in the context of a 2-cell structure. We will say that a commutative square, $d\pi_1=c\pi_2$, such as the leftmost one in the diagram $(\ref{eq squares})$ below (compare with (PC3)) satisfies (PC3*)  if there are spans \[\xymatrix{C_2&C_3\ar[l]_{p_1}\ar[r]^{p_2}&C2},\quad \xymatrix{C_3&C_4\ar[l]_{p'_1}\ar[r]^{p'_2}&C3}\] such that all the squares
\begin{equation}\label{eq squares}
\quadrado
{ C_2  }       {  \pi_2 }     { C_1    }
{ \pi_1  }                  {   c  }
{ C_1  }       {  d }     {   C_0  }%
\ \ \ \ \ \ \
\quadrado
{ C_3  }       { p_2 }     { C_2    }
{ p_1  }                  {   \pi_1  }
{ C_2  }       {  \pi_2 }     {   C_1  }
\ \ \ \ \ \ \
\quadrado
{ C_4  }       { p'_2 }     { C_3    }
{ p'_1  }                  {   p_1  }
{ C_3  }       {  p_2 }     {   C_2  }
\end{equation}
are $H$-cartesian pullback squares (see Definition \ref{Def: cartesian 2-cell structure, Ch2}). In that case we have induced morphisms
\begin{eqnarray*}
e_{1} &=&\left\langle 1,ed\right\rangle :C_{1}\longrightarrow C_{2} \\
e_{2} &=&\left\langle ec,1\right\rangle :C_{1}\longrightarrow C_{2} \\
m_{1} &=&1\times m:C_{3}\longrightarrow C_{2} \\
m_{2} &=&m\times 1:C_{3}\longrightarrow C_{2} \\
i_{0} &=&\left\langle e_{1},e_{2}\right\rangle :C_{2}\longrightarrow C_{3}\\
m_3&=&1\times m\times 1\colon{C_4\to C_3}\\
m_4&=&m\times 1\times 1\colon{C_4\to C_3}\\
m_5&=&1\times 1\times m\colon{C_4\to C_3},
\end{eqnarray*}
and for appropriate 2-cells $\alpha \in H\left( C_{3},C_{1}\right) \ \text{and }\lambda ,\rho \in H\left(
C_{1},C_{1}\right)$ we have (see Definition \ref{def cartesian}) induced 2-cells \[(\alpha\times_{0_{1_{C_0}}} 0_{1_{C_1}}),(0_{1_{C_1}}\times_{0_{1_{C_0}}} \alpha)\in H(C_4,C_2)\] and \[(\rho\times_{0_{1_{C_0}}} 0_{1_{C_1}}),(0_{1_{C_1}}\times_{0_{1_{C_0}}} \lambda)\in H(C_2,C_2).\]
All the previous morphisms and 2-cells will be needed in the definition below.

\begin{definition}
\label{def pseudocat} A \emph{pseudocategory} internal to a category $\mathbf{C}$, with respect to a 2-cell structure $\left( H,\dom ,\cod ,0,+\right) $, is a system
\begin{equation*}
\left( C_{0},C_{1},C_2,d,c,e,m,\pi_1,\pi_2,\alpha ,\lambda ,\rho \right)
\end{equation*}%
in which $(C_0,C_1,C_2,d,c,e,m,\pi_1,\pi_2)$ is a precategory in $\C$, that is, $C_{0},C_{1},C_2$ are objects in $\C$, $d,c,e,m,\pi_1,\pi_2$ are morphisms in $\C$, which display as%
\begin{equation*}
\xymatrix{ C_2 \ar@<1.5ex>[r]^{\pi_2}\ar@<-1.5ex>[r]_{\pi_1}\ar[r]|{m} & C_1 \ar@<1ex>[r]^{d} \ar@<-1ex>[r]_{d} & C_0
\ar[l]|{e} },
\end{equation*}
satisfying the conditions (PC1), (PC2) and (PC3), where in the place of (PC3), the commutative square $d\pi_1=c\pi_2$ is required to satisfy (PC3*) (see $(\ref{eq squares})$ above). 
  Moreover $\alpha ,\lambda ,\rho $ are natural and invertible 2-cells
\begin{equation*}
\alpha \in H\left( C_{3},C_{1}\right) \ \text{and }\lambda ,\rho \in H\left(
C_{1},C_{1}\right)
\end{equation*}%
with%
\begin{eqnarray*}
\dom \left( \alpha \right) &=&mm_{1}\ ,\ \cod \left( \alpha
\right) =mm_{2} \\
\dom \left( \lambda \right) &=&me_{2}\ ,\ \dom \left( \rho \right)
=me_{1}\ ,\ \cod \left( \lambda \right) =1_{C_{1}}=\cod \left(
\rho \right)
\end{eqnarray*}%
and such that the following conditions hold true
\begin{eqnarray}
d\lambda =&0_{d}&=d\rho \nonumber\\
c\lambda =&0_{c}&=c\rho \nonumber\\
d\alpha =0_{d\pi _{2}p_{2}}\ &,&\ c\alpha =0_{c\pi _{1}p_{1}}\nonumber\\
\lambda e&=&\rho e\nonumber\\
\alpha m_4 +\alpha m_5  &=& m\left( \alpha \times 0_{1}\right) +\alpha m_3
+m\left( 0_{1}\times \alpha \right)   \label{axiom: pentagon coherence eq form} \\
m\left( 0_{1}\times
\lambda \right) &=& m\left( \rho \times 0_{1}\right) +\alpha i_{0} .  \label{axiom: midle triang coheren eq form}
\end{eqnarray}%
\end{definition}

The 2-cells $\alpha ,\lambda ,\rho $ are usually presented as
\begin{equation*}
\cela {C_3} {mm_1} {\alpha} {mm_2} {C_1}%
\ ,\
\cela {C_1} {me_2} {\lambda} {1} {C_1}%
,\
\cela {C_1} {me_1} {\rho} {1} {C_1}.
\end{equation*}%
Equations $\left( \ref{axiom: pentagon coherence eq form}\right) $ and $%
\left( \ref{axiom: midle triang coheren eq form}\right) $ correspond respectively to the
internal versions of the Pentagon and Middle Triangle in Mac Lane's Coherence Theorem \cite{ML},
presented diagrammatically as
\begin{equation}
\xymatrix@=2pc{ & \bullet  \ar[rr]^{ m  (0_{C_1}\times_{C_0}\alpha)}
             \ar[ldd] _{ \alpha  ( 1_{C_1} \times_{C_0} 1_{C_1} \times_{C_0} m ) }
 & & \bullet \ar[rdd]^{\alpha   ( 1_{C_1} \times_{C_0} m \times_{C_0} 1_{C_1} ) }  & \\
& & & & \\
\bullet   \ar[rrdd]_{\alpha  ( m \times_{C_0} 1_{C_1} \times_{C_0} 1_{C_1} ) }
  &  &  &  &  \bullet  \ar[ddll]^{m  (\alpha\times_{C_0} 0_{C_1})}  \\
& & & & \\
 & & \bullet &  &}
\label{pentagon coherence}
\end{equation}%
\begin{equation}
\xymatrix@=2pc{
 \bullet   \ar[rr]^{\alpha  i_0}
            \ar[rd]_{m  (0_{C_1} \times_{C_0} \lambda)}
&& \bullet   \ar[ld]^{m  (\rho \times_{C_0} 0_{C_1})} \\
& \bullet}
\label{triangle coherence}
\end{equation}%
and restated in terms of generalized elements as%
\begin{equation}
\xymatrix@=0.5pc{
& f(g(hk))
 \ar[rr]^{ f\alpha_{g,h,k} }
 \ar[ldd] _{ \alpha_{f,g,hk} }
 & & f((gh)k)
\ar[rdd]^{\alpha_{f,gh,k} }
& \\
& & & & \\
(fg)(hk)
 \ar[rrdd]_{\alpha_{fg,h,k} }
  &  &  &  &  (f(gh))k
\ar[ddll]^{\alpha_{f,g,h} k}  \\
& & & & \\
 & & ((fg)h)k &  &}
\tag{pentagon}
\end{equation}%
\begin{equation}
\xymatrix@=2pc{
f(1g)   \ar[rr]^{\alpha_{f,1,g}}
            \ar[rd]_{f \lambda_g}
&& (f1)g   \ar[ld]^{\rho_f g} \\
& fg}
\tag{midle triangle}
\end{equation}%
with $m\left\langle f,g\right\rangle =fg$.

We conclude this exposition with three results of application.

The first example is an instance of section \ref{Example: 2-cells in Cat},
the second example is an application of section \ref{Example 1}, while the
third one is from \ref{Remark20}.

\subsection{Pseudocategories internal to $\Cat(\B)$ with $\B$ (weakly) Ma'tsev}\label{Th WMC}

In the setting of section \ref{section2}, let $\mathbf{B}$ be
a (weakly) Mal'cev category (see for example \cite{Ch3}) and consider $\mathbf{C}=$Cat$\left( \mathbf{B}\right)$ equipped with the 2-cell structure  $$\mathbf{H}=\left( H,\dom ,\cod ,0,+\right) $$ as in section \ref{Example: 2-cells in Cat}.

As proved in \cite{Ch9}, the pair $(\C,\mathbf{H})$
is a (weakly) Mal'tsev sesquicategory in which every pullback square is $H$-cartesian. The following Theorem is also proved there.

\begin{theorem}\label{thm WMC} Let $\B$ be a (weakly) Mal'tsev category and consider $\C=\Cat(\B)$ with the 2-cell structure $\left( H,\dom ,\cod ,0,+\right)$ as in \ref{ex7}. A pseudocategory internal to $\mathbf{C}$ and relative to the given 2-cell structure, satisfying the additional condition%
\begin{equation}\label{additional condition}
\lambda e=0_{e}=\rho e
\end{equation}%
is completely determined by a reflexive graph in $\mathbf{C}$%
\begin{equation*}
\xymatrix{
C_1
\ar@<1ex>[r]^{d}
\ar@<-1ex>[r]_{c}
& C_0
\ar[l]|{e}}
\ \ ,\ \ de=1_{C_{0}}=ce
\end{equation*}%
together with 2-cells%
\begin{equation*}
\lambda ,\rho \in H\left( C_{1},C_{1}\right)
\end{equation*}%
satisfying the following conditions,%
\begin{equation*}
\cod \left( \lambda \right) =1_{C_{1}}=\cod \left( \rho \right)
\end{equation*}%
\begin{eqnarray*}
d\lambda  &=&0_{d}=d\rho  \\
c\lambda  &=&0_{c}=c\rho  \\
\lambda e &=&0_{e}=\rho e
\end{eqnarray*}%
and furthermore it is equipped with a morphism%
\begin{equation*}
m:C_{2}\longrightarrow C_{1}
\end{equation*}%
uniquely determined by%
\begin{equation*}
me_{1}=u\ \text{ , }\ me_{2}=v,
\end{equation*}
with $v=\dom \left( \lambda \right) $ and $u=\dom \left( \rho
\right)$, together with a 2-cell $\alpha \in H\left( C_{3},C_{1}\right) $, uniquely determined by
\begin{equation*}
\alpha i_{1}e_{1}=-\rho u\ ,\ \alpha i_{2}e_{1}=-u\lambda +\lambda u\ ,\
\alpha i_{2}e_{2}=\lambda v.
\end{equation*}%
\end{theorem}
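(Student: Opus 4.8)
The plan is to establish the stated correspondence in two directions, with essentially all the work in the reconstruction half. The passage from a (non natural) pseudocategory satisfying $\lambda e=0_{e}=\rho e$ to the listed data is immediate: one forgets $m$ and $\alpha$, keeps the reflexive graph and the 2-cells $\lambda ,\rho$, and reads off $\func{cod}(\lambda)=1_{C_{1}}=\func{cod}(\rho)$, $d\lambda=0_{d}=d\rho$, $c\lambda=0_{c}=c\rho$ and $\lambda e=0_{e}=\rho e$ straight from Definition~\ref{def pseudocat} and the extra hypothesis. The substance is the converse: that this reduced data determines $m$ and $\alpha$ uniquely and that the reconstructed system satisfies all of \eqref{eq_a1}--\eqref{eq_a5}.

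For the multiplication I would note that $C_{2}=C_{1}\times_{C_{0}}C_{1}$ is the pullback of the split epimorphisms $d$ and $c$, both split by the common section $e$; this is exactly the situation in which the weakly Mal'cev property of $\mathbf{C}=\func{Cat}(\mathbf{B})$, inherited from $\mathbf{B}$ (see \cite{Ch3}), applies. Hence a morphism out of $C_{2}$ is uniquely determined by its restrictions along $e_{1}=\langle 1,ed\rangle$ and $e_{2}=\langle ec,1\rangle$. Since $\func{dom}(\rho)=me_{1}$ and $\func{dom}(\lambda)=me_{2}$, putting $u=\func{dom}(\rho)$ and $v=\func{dom}(\lambda)$ pins $m$ down as the unique morphism with $me_{1}=u$, $me_{2}=v$. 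I would then check that $(u,v)$ is an admissible pair (they agree on the overlap, so the weakly Mal'cev induced morphism exists) and that $dm=d\pi_{2}$, $cm=c\pi_{1}$ hold, which by joint uniqueness reduces to verifying each after precomposition with $e_{1}$ and $e_{2}$.

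For the associativity cell the key observation is that, in the 2-cell structure of Section~\ref{Example: 2-cells in Cat}, the essential datum of a 2-cell in $H(C_{3},C_{1})$ is a single morphism $t_{\alpha}\colon (C_{3})_{0}\to (C_{1})_{1}$ of $\mathbf{B}$, and $(C_{3})_{0}$ is an iterated pullback of split epimorphisms with sections built from $e$. Applying the weakly Mal'cev property of $\mathbf{B}$ once for each of the two pullbacks defining $C_{3}=C_{2}\times_{C_{1}}C_{2}$, I would conclude that $\alpha$ is uniquely determined by its restrictions along the composite injections $i_{j}e_{k}$. Restricting the triangle conditions \eqref{eq_a2}, \eqref{eq_a3}, \eqref{eq_a4} along these injections and simplifying with $d\lambda=0_{d}=d\rho$, $c\lambda=0_{c}=c\rho$ and $\lambda e=0_{e}=\rho e$ then collapses them to the three prescribed values $\alpha i_{1}e_{1}=-\rho u$, $\alpha i_{2}e_{1}=-u\lambda+\lambda u$ and $\alpha i_{2}e_{2}=\lambda v$; together with the degeneracy relations the remaining restrictions are pinned down (several of them vanishing), so these three data determine $\alpha$.

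The main obstacle is twofold. First, weakly Mal'cev uniqueness supplies uniqueness of $m$ and $\alpha$ cheaply, but \emph{existence} --- that the prescribed restriction data actually assembles into a genuine morphism and a genuine 2-cell --- requires checking the admissibility (agreement on overlaps) of the restrictions, which is the delicate part of the weakly Mal'cev calculus and the place where one leans on \cite{Ch9}. Second, I must verify that the reconstructed system satisfies the remaining axioms, namely the pentagon \eqref{eq_a1} and the naturality relations \eqref{eq_a5}, from the four conditions on $\lambda,\rho$ alone. For each of these I would again invoke joint uniqueness: both sides are 2-cells whose source is one of the pullback objects, so the identity need only be checked after restriction along the canonical injections (equivalently, on generalized elements), where it becomes a finite computation in the 2-cell structure driven by the relations already imposed on $\lambda,\rho,u,v$. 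This reduction of global coherence to a handful of restricted identities is precisely what makes the weakly Mal'cev hypothesis indispensable, and I expect the pentagon to be the most laborious of these checks.
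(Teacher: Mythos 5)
A preliminary remark on the comparison itself: the paper contains no proof of this theorem --- it is explicitly stated as ``proved in \cite{Ch9}'', the only strategic indications being that $Cat\left(\mathbf{B}\right)$ is a weakly Mal'cev sesquicategory, that $\alpha$ is determined by $\alpha i_{1}$ and $\alpha i_{2}$, and the closing remark that \eqref{eq_a3}, \eqref{eq_a4}, \eqref{eq_a5} together imply \eqref{eq_a1} and \eqref{eq_a2}. Your \emph{uniqueness} half is consistent with that strategy and is essentially right: $(e_{1},e_{2})$ and $(i_{1},i_{2})$ are canonical injections of pullbacks of split epimorphisms, hence jointly epimorphic; since a 2-cell of Section \ref{Example: 2-cells in Cat} is carried by a single morphism of $\mathbf{B}$ and limits in $Cat\left(\mathbf{B}\right)$ are computed levelwise, this transfers to whiskering and pins down $m$ and $\alpha$ from the listed restrictions. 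The first genuine gap is the principle you invoke for \emph{existence}: twice you assert that, in a weakly Mal'cev category, agreement on the overlap ($ue=ve$, your ``admissibility'') makes the induced morphism on the pullback exist. This is false, and it is precisely what ``weakly'' means: the injections are jointly epimorphic, which gives uniqueness only; they need not be jointly strongly epimorphic, and even strong epimorphicity would not manufacture the morphism. Already in $Grp$ (Mal'cev, hence weakly Mal'cev): take $C_{0}$ trivial, $C_{1}=G$ nonabelian, $d=c$ trivial, $e$ the unique section, so $C_{2}=G\times G$, $e_{1}(g)=(g,1)$, $e_{2}(g)=(1,g)$; the pair $u=v=1_{G}$ agrees on the overlap, yet a homomorphism $m$ with $me_{1}=1_{G}=me_{2}$ would force $gh=m(g,h)=hg$ for all $g,h$. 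So existence of $m$ and $\alpha$ cannot be obtained by ``checking admissibility''; in the theorem it is part of the hypothesis (they belong to the given pseudocategory), the statement being one of determination rather than construction, and any genuine existence claim requires the actual computations of \cite{Ch9}.

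The second gap is your plan to verify \eqref{eq_a5}, and from it the pentagon \eqref{eq_a1}, ``from the four conditions on $\lambda,\rho$ alone'' by restriction along canonical injections. The relations \eqref{eq_a5} are equations between 2-cells out of $C_{1}$, which is not a pullback object, so there is no jointly epimorphic family to restrict along; and they are genuinely independent of the four listed conditions. For instance $\lambda\circ\lambda$, with $\func{cod}(\lambda)=1$ and $\func{dom}(\lambda)=v$, unwinds to $\lambda+\lambda v=\lambda+v\lambda$, i.e.\ (using invertibility) to the commutation $\lambda v=v\lambda$; in the additive setting of Section \ref{Remark20} the relations \eqref{eq_a5} say exactly that commutators such as $[\lambda,\lambda]$, $[\lambda,\rho]$ vanish, which $d\lambda=0_{d}$, $c\lambda=0_{c}$, $\lambda e=0_{e}$ and $\func{cod}(\lambda)=1$ do not force. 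This is why the paper's remark immediately after the theorem places \eqref{eq_a5} on the hypothesis side of the implication $\ref{eq_a3}+\ref{eq_a4}+\ref{eq_a5}\Rightarrow\ref{eq_a1}+\ref{eq_a2}$: the naturality of $\lambda$ and $\rho$ with respect to each other is an input that the reduced data does not determine, and your proposed verification runs this implication backwards. The correct division of labour is: uniqueness of $m,\alpha$ by joint epimorphicity (which you have), the three stated values by restricting \eqref{eq_a3}, \eqref{eq_a4} along $e_{1},e_{2}$ (which you have), and then --- only under the standing hypothesis that all pseudocategory axioms, including \eqref{eq_a5}, hold --- the derivability of \eqref{eq_a1} and \eqref{eq_a2}, which is the ``striking fact'' the paper attributes to \cite{Ch9}.
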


\subsection{Pseudocategories in groups}

In this second example we describe pseudocategories internal to groups. It gives a \emph{relaxed}
notion of a crossed module $(X,B,\partial)$ in which we have the freedom for choosing an element $\delta $ in
the centre of \thinspace $X$.

A pseudocategory internal to the category of groups, with the 2-cell structure given as in \ref{ex8}, is
completely determined by a group homomorphism
\begin{equation*}
X\overset{\partial }{\longrightarrow }B,
\end{equation*}%
an action of $B$ in $X$ (denoted by $b\cdot x$) and a distinguished element
in $X$, say $\delta $, satisfying the following conditions
\begin{eqnarray*}
\partial \delta &=&0 \\
x &=&\delta +x-\delta \\
\partial \left( b\cdot x\right) &=&b\partial \left( x\right) b^{-1} \\
\partial \left( x\right) \cdot \bar{x} &=&x+\bar{x}-x.
\end{eqnarray*}%

In this case it is not difficult to describe the objects and the pseudomorphisms from the internal pseudocategory, with the respective isomorphisms $\alpha,\lambda$ and $\rho$. The objects are elements of $B$, the arrows are pairs $$\left(
x,b\right) :b\longrightarrow \partial x+b$$ and the composition of $$\left(
x^{\prime },\partial x+b\right) :\partial x+b\longrightarrow \partial
x^{\prime }+\partial x+b$$ with $$\left( x,b\right) :b\longrightarrow \partial
x+b$$ is $$\left( x^{\prime }+x-\delta +b\cdot \delta ,b\right)
:b\longrightarrow \partial x^{\prime }+\partial x+b.$$ The isomorphism
between 
$\left( 0,\partial x+b\right) \circ \left( x,b\right) =\left(
x,b\right) \circ \left( 0,b\right) $ and $\left( x,b\right) $
 is the element 
 $\left( \delta ,0\right) \in X\rtimes B$.
  Associativity is satisfied, since $$\left( x^{\prime \prime },\partial x^{\prime }+\partial x+b\right) \circ
\left( \left( x^{\prime },\partial x+b\right) \circ \left( x,b\right)
\right) =\left( \left( x^{\prime \prime },\partial x^{\prime }+\partial
x+b\right) \circ \left( x^{\prime },\partial x+b\right) \right) \circ \left(
x,b\right) .$$

\subsection{The additive case}\label{the abelian case}
In the case when $\mathbf{A}$ is an additive category with kernels, equipped with  a 2-cell
structure which is given by an Ab-functor%
\begin{equation*}
H:\mathbf{A}^{op}\times \mathbf{A\longrightarrow }\Ab
\end{equation*}%
and a natural transformation%
\begin{equation*}
D:H\longrightarrow \hom _{\mathbf{A}},
\end{equation*}%
as in \ref{ex12}, \ref{ex13}, \ref{ex14},
then a pseudocategory internal to it is completely determined by
\begin{equation*}
A\overset{h}{\longrightarrow }B
\end{equation*}%
\begin{eqnarray*}
\lambda ,\rho &\in &H\left( A,A\right) \\
\eta &\in &H\left( B,A\right)
\end{eqnarray*}%
\begin{eqnarray*}
h\lambda &=&0 \\
h\rho &=&0 \\
h\eta &=&0
\end{eqnarray*}
with $\alpha $ uniquely determined.
The pseudocategory thus determined is of the form (see \cite{Ch5})%
\begin{equation*}
A\oplus A\oplus B\overset{m}{\longrightarrow }A\oplus B\underset{\left(
h \;1\right) }{\overset{\left( 0\;1\right) }{\underrightarrow{%
\overrightarrow{\overset{\binom{0}{1}}{\longleftarrow }}}}}B
\end{equation*}%
\begin{equation*}
m=\left(
\begin{array}{ccc}
f & g & h \\
0 & 0 & 1%
\end{array}%
\right)
\end{equation*}%
\begin{eqnarray*}
g &=&1-D\left( \lambda \right) \\
f &=&1-D\left( \rho \right) \\
h &=&-D\left( \eta \right)
\end{eqnarray*}%
\begin{equation*}
\alpha =\left(
\begin{array}{cccc}
\alpha _{1} & \alpha _{2} & \alpha _{3} & \alpha _{0} \\
0 & 0 & 0 & 0%
\end{array}%
\right)
\end{equation*}%
\begin{eqnarray*}
\alpha _{1} &=&-f\rho \\
\alpha _{2} &=&\lambda +g\rho -\rho -f\lambda \\
\alpha _{3} &=&g\lambda -f\eta \delta \\
\alpha _{0} &=&g\eta -f\eta
\end{eqnarray*}%
\begin{equation*}
\lambda =\left(
\begin{array}{cc}
\lambda & \eta \\
0 & 0%
\end{array}%
\right) \ \ \ \ \ \ \ \ \ \ \ \rho =\left(
\begin{array}{cc}
\rho & \eta \\
0 & 0%
\end{array}%
\right),
\end{equation*}
with obvious abuse of notation for $\lambda$ and $\rho$.

In particular the category of abelian chain complexes is of this form but in which we have to specify that $\lambda$ and $\rho$ are natural 2-cells in the sense of Definition \ref{def natcell}. See also the last remark in the conclusion of this article.

Another example of this form is the category TAG, of Topological Abelian Groups (see example \ref{ex15}). A pseudocategory in TAG (with the
2-cell structure as in \ref{ex15}) is completely determined by a morphism in TAG%
\begin{equation*}
k:A\longrightarrow B,
\end{equation*}%
together with
\begin{equation*}
\lambda ,\rho :I\times A\longrightarrow A
\end{equation*}%
in $H\left( A,A\right) $ and also
\begin{equation*}
\eta :I\times B\longrightarrow A
\end{equation*}%
in $H\left( B,A\right) $ satisfying%
\begin{equation*}
k\rho \left( t,\_\right) =0,k\lambda \left( t,\_\right) =0,k\eta \left(
t,\_\right) =0.
\end{equation*}%
The objects in the pseudocategory are the points in $B$ while the
pseudomorphisms are pairs $\left( a,b\right) $ with domain $b$ and codomain $%
k\left( a\right) +b$; the composition of
\begin{equation*}
\xymatrix{b \ar[r]^{(a,b)} & b' \ar[r]^{(a',b')} & b''}
\end{equation*}
is given by the following formula%
\begin{equation*}
\left( a-\rho \left( 1,a\right) +a^{\prime }-\lambda \left( 1,a^{\prime
}\right) -\eta \left( 1,b\right) ,b\right) .
\end{equation*}%
In particular, if we consider that $A$ is the space of paths in $B$ starting
at zero%
\begin{equation*}
A=\left\{ x:I\longrightarrow B\mid x\text{ continuous and }x\left( 0\right)
=0\right\}
\end{equation*}%
with
\begin{equation*}
k\left( x\right) =x\left( 1\right)
\end{equation*}%
and choose representatives of $\lambda ,\rho $ and $\eta $ as%
\begin{eqnarray*}
\lambda \left( s,x\right) \left( t\right) &=&\left\{
\begin{tabular}{ccc}
$x\left( st\right) -x\left( 2st\right) $ & if & $t\leqslant \frac{1}{2}$ \vspace{.2cm}\\
$x\left( st\right) -x\left( s\right) $ & if & $t\geqslant \frac{1}{2}$\vspace{.2cm}
\end{tabular}%
\right. \\
\rho \left( s,x\right) \left( t\right) &=&\left\{
\begin{tabular}{ccc}
$x\left( st\right) $ & if & $t\leqslant \frac{1}{2}$ \vspace{.2cm}\\
$x\left( st\right) -x\left( 2st-s\right) $ & if & $t\geqslant \frac{1}{2}$\vspace{.2cm}
\end{tabular}%
\right. \\
\eta &=&0
\end{eqnarray*}%
then we obtain the usual composition of paths%
\begin{equation*}
y+x=\left\{
\begin{tabular}{ccc}
$x\left( 2t\right) $ & if & $t\leqslant \frac{1}{2}$ \vspace{.2cm}\\
$y\left( 2t-1\right) +x\left( 1\right) $ & if & $t\geqslant \frac{1}{2}$%
\end{tabular}%
\right. .
\end{equation*}

\section{Conclusion}

As it follows from Definition \ref{def natcell}, recognizing whether a given 2-cell $x\in H\left( A,B\right) $ is a natural one is generally a complicated task: we have to analyse equation $\left( \ref{eq_a}%
\right) $ for every possible $y$. On the other hand, if removing naturality
conditions for $\alpha ,\lambda ,\rho $, we loose the Coherence Theorem \cite%
{ML} and we have no longer guaranteed that for example the 
diagrams 
\begin{equation}
\xymatrix@=2pc{
1(fg)   \ar[rr]^{\alpha_{1,f,g}}
            \ar[rd]_{\lambda_{fg} }
&& (1f)g   \ar[ld]^{\lambda_f g} \\
& fg}
\label{left triangle}
\end{equation}%
\begin{equation}
\xymatrix@=2pc{
f(g1)   \ar[rr]^{\alpha_{f,g,1}}
            \ar[rd]_{f \rho_g}
&& (fg)1   \ar[ld]^{\rho_{fg} } \\
& fg}  \label{right triangle}
\end{equation}
are commutative. This diagrams when internalized correspond respectively to the following
equations%
\begin{eqnarray*}
m\left( \lambda \times 0_{C_{1}}\right) +\alpha i_{2} &=&\lambda m, \\
\rho m+\alpha i_{1} &=&m\left( 0_{C_{1}}\times \rho \right)
\end{eqnarray*}%
and since the 2-cells are assumed to be invertible they can be presented as%
\begin{eqnarray*}
\alpha i_{2} &=&-m\left( \lambda \times 0_{C_{1}}\right) +\lambda m, \\
\alpha i_{1} &=&-\rho m+m\left( 0_{C_{1}}\times \rho \right) .
\end{eqnarray*}

The examples illustrated in the previous section suggest an intermediate notion for an \emph{unnatural} pseudocategory, where we do not ask for the 2-cells $\alpha,\lambda,\rho$ to be natural, but only that they are natural with respect to each other. In fact in all those cases the 2-cell $\alpha$ is completely determined and hence (at least for a further study of those cases) we may ask that only $\lambda$ and $\rho$ are natural with respect to each other, which in other words means that the horizontal compositions 
\begin{equation}
\lambda \circ \lambda ,\lambda \circ \rho ,\rho \circ \rho ,\rho \circ
\lambda  \label{eq_a5}
\end{equation}
are defined, or even that their commutators vanish (Definition \ref{def commutator}). In order to have some control on the coherence aspects we also ask that at least the conditions
\begin{eqnarray}
\alpha i_{2} &=&-m\left( \lambda \times 0_{C_{1}}\right) +\lambda m
\label{eq_a3} \\
\alpha i_{1} &=&-\rho m+m\left( 0_{C_{1}}\times \rho \right) ,  \label{eq_a4}
\end{eqnarray}%
are satisfied.
%
%
%
%

Clearly when $\alpha ,\lambda ,\rho $ are natural 2-cells then
these three conditions are redundant and we obtain Definition \ref{def
pseudocat}.

For instance, in example \ref{Th WMC}, as also proved in \cite{Ch9},  we have that
\begin{equation*}
(\ref{eq_a3})+(\ref{eq_a4})+(\ref{eq_a5})\Rightarrow (\ref{eq_a1})+(\ref{eq_a2}),
\end{equation*}%
with $(\ref{eq_a1}),(\ref{eq_a2})$ the \emph{pentagon} and the \emph{midle triangle} identities restated as: \begin{eqnarray}
m\left( \alpha \times 0_{1}\right) +\alpha m_3
+m\left( 0_{1}\times \alpha \right) &=&\alpha m_4 +\alpha m_5  \label{eq_a1} \\
\alpha i_{0} &=&-m\left( \rho \times 0_{1}\right) +m\left( 0_{1}\times
\lambda \right).  \label{eq_a2}
\end{eqnarray}%

In the example of section \ref{the abelian case}
 and using the notation
\begin{equation*}
\left[ x,y\right] =D\left( x\right) y-xD\left( y\right)
\end{equation*}%
we have that $\alpha$ is determined by $(\ref{eq_a3})+(\ref{eq_a4})$ but  $\left( \ref{eq_a1}\right) $ is no longer a trivial condition, it becomes equivalent to
\begin{equation*}
\left( 1-D\rho \right) \left[ \rho ,\rho \right] =0
\end{equation*}%
\begin{equation*}
D\lambda \left[ \rho ,\rho \right] =D\rho \left[ \lambda ,\rho \right]
+\left( 1-D\rho \right) \left[ \rho ,\lambda \right]
\end{equation*}%
\begin{equation*}
\left( 1-D\lambda \right) \left[ \lambda ,\rho \right] +D\lambda \left[ \rho
,\lambda \right] =D\rho \left[ \lambda ,\lambda \right] +\left( 1-D\rho
\right) \left[ \rho ,\eta \right] h
\end{equation*}%
\begin{equation*}
\left( 1-D\lambda \right) \left[ \lambda ,\lambda \right] =\left( 1-D\lambda
\right) \left[ \lambda ,\eta \right]
\end{equation*}%
\begin{equation*}
\left( 1-D\rho -D\lambda \right) \left[ \lambda ,\eta \right] =\left(
1-D\rho -D\lambda \right) \left[ \rho ,\eta \right]
\end{equation*}%
which, however, is  trivial as soon as we introduce $\left( \ref{eq_a5}\right) $.


\begin{thebibliography}{99}

\bibitem{Bourn} D. Bourn, \emph{Another denormalization theorem for the abelian
chain complexes}, J. Pure and Applied Algebra \textbf{66} (1990) 229--249.

\bibitem{BR} R. Brown and Ph. J. Higgins, \emph{Cubical Abelian Groups with
Connections are Equivalent to Chain Complexes}, Homology, Homotopy and
Applications \textbf{5} n. 1 (2003) 49--52.

\bibitem{Crans2} S. E. Crans, \emph{A tensor product for Gray-categories}, Theory
and Applications of Categories \textbf{5} n.~2 (1999) 12--69.

\bibitem{Grandis-Pare-01} M. Grandis and R. Par\'e, \emph{Limits in double categories}, Cahiers
Topologie et G\'{e}om\'{e}trie Difer\'{e}ntielle Cat\'{e}goriques \textbf{40}
(1999) 162--220.

\bibitem{Grandis-Pare-02} M. Grandis and R. Par\'e, \emph{Adjoints for double categories}, Cahiers
Topologie et G\'{e}om\'{e}trie Difer\'{e}ntielle Cat\'{e}goriques \textbf{45}
(2004) 193--240.

\bibitem{Gray} J. W. Gray, \emph{Formal Category Theory: Adjointness for
2-categories}, Lecture Notes in Mathematics, Springer-Verlag, 1974

\bibitem{Janelidze01} G. Janelidze, \emph{Precategories and Galois theory}, Springer Lecture Notes in Mathematics \textbf{1488} (1991) 157--173. 

    

\bibitem{Leinster} T. Leinster, \emph{Higher Operads, Higher Categories},
London Mathematical Society Lecture Notes Series, Cambridge University Press, 2003 (electronic version).

\bibitem{ML} S. Mac Lane, \emph{Categories for the Working Mathematician}, 2nd edition,
Springer-Verlag, 1998.

\bibitem{Ch5} N. Martins-Ferreira, \emph{Internal Weak Categories in Additive
2-Categories with Kernels},  	Galois Theory, Hopf Algebras, and Semiabelian Categories, Fields Institute Communications \textbf{43} (2004) 387--410.

\bibitem{Ch1} N. Martins-Ferreira, \emph{Pseudo-categories}, Journal of Homotopy and Related Strucutures \textbf{1} n.~1
(2006) 47--78.

\bibitem{Ch6} N. Martins-Ferreira, \emph{The (tetra)category of pseudocategories
in additive 2-categories with kernels}, Applied Categorical Structures \textbf{18} n.~3 (2010) 309--342.

\bibitem{Ch3} N. Martins-Ferreira, \emph{Weakly Mal'cev categories}, Theory and
Applications of Categories \textbf{21} n.~6 (2008)  91--117.

\bibitem{Ch9} N. Martins-Ferreira, \emph{Low-dimensional internal categorical
structures in weakly Mal'cev sesqui\-categories}, PhD Thesis, University of Cape Town, 2008.

\bibitem{NMF09arxiv} N. Martins-Ferreira, \emph{On pseudocategories in a category with a 2-cell structure}, arXiv: 0902.3626v1, 2009.

\bibitem{NMF-bicat} N. Martins-Ferreira, \emph{Internal bicategories in groups}, submitted.

\bibitem{Street} R. Street,\emph{Cosmoi of internal categories}, Trans. Amer.
Math. Soc. \text{258} (1980) 271--318.

\bibitem{Street2} R. Street,\emph{Fibrations in Bicategories}, Cahiers
Topologie et G\'{e}om\'{e}trie Difer\'{e}ntielle Cat\'{e}go\-riques \textbf{21} (1980) 111--120.

\bibitem{str3} R. Street, \emph{Categorical Structures}, Handbook of Algebra \textbf{1} (1996) 529--577.

\end{thebibliography}
\end{document}